\newtheorem{thm}{Theorem}[section]
\newtheorem{lem}[thm]{Lemma}
\newtheorem{pro}[thm]{Proposition}
\newtheorem{rmk}[thm]{Remark}
\newcommand{\be }{\begin{eqnarray*}}
\newcommand{\ee }{\end{eqnarray*}}
\newcommand{\pf}{\noindent{\bf Proof.}\ }
\def\gpd{\,\lower1pt\hbox{$\longrightarrow$}\hskip-.24in\raise2pt
         \hbox{$\longrightarrow$}\,}
\def\qed{\hfill ~\vrule height6pt width6pt depth0pt}
\title
{ Fixed subgroups of automorphisms of\\ hyperbolic 3-manifold
groups}
\author{Jianfeng Lin and  Shicheng Wang}
\begin{document}

\maketitle 
\begin{abstract}

For fixed subgroups $Fix(\phi)$ of automorphisms $\phi$ on
hyperbolic 3-manifold groups $\pi_{1}(M)$, we observed that
$\text{rk}(Fix(\phi))<2\text{rk}(\pi_{1}(M))$ and the constant 2  in
the inequality is
 sharp;  we also classify all possible groups $Fix(\phi)$.

\end{abstract}

\section{Introduction}
 For a group $G$ and an automorphism $\phi: G \rightarrow G$, we
define $Fix(\phi)=\{\omega\in G|\phi(\omega)=\omega\}$, which is a
subgroup of $G$, and use $\text{rk}(G)$ to denote the rank of $G$.

The so called Scott conjecture proved 20 years ago in a celebrate work of M.
Bestvina and M. Handel \cite{BH} states that:

\begin{thm}\label{thm:scott conj}
For each automorphism $\phi$ on a free group $G=F_n$,
$$\text{rk}(Fix(\phi))\leq \text{rk}(G).$$
\end{thm}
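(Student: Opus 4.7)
The plan is to realize the free group $G = F_n$ as the fundamental group of a finite graph $R$ (for instance a wedge of $n$ circles), and to represent the outer automorphism class of $\phi$ by a homotopy equivalence $f \colon R \to R$ with marking data recording $\phi$ itself. Conjugacy classes inside $Fix(\phi)$ then correspond to homotopy classes of loops $\gamma$ such that $f\circ\gamma$ is freely homotopic to $\gamma$ (Nielsen-fixed classes), and the rank of $Fix(\phi)$ is controlled by the number and topological complexity of such fixed classes together with the action of $\phi$ on the ``untwisted'' part of the basepoint fiber.

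The central obstacle is that a generic graph map $f$ exhibits cancellation under iteration, which prevents any clean combinatorial reading of Nielsen-fixed classes. To bypass this I would invoke the Bestvina--Handel theory of relative train track representatives: $\phi$ can be represented by a map $f\colon \Gamma\to\Gamma$ on some marked graph equipped with an invariant filtration $\emptyset = \Gamma_0 \subset \Gamma_1 \subset \cdots \subset \Gamma_k = \Gamma$, such that each stratum $H_i = \overline{\Gamma_i\setminus\Gamma_{i-1}}$ is either an exponentially growing train track stratum (no illegal turn is created under iteration; edge transitions are governed by a Perron--Frobenius matrix) or a polynomially growing (non-exponentially growing) stratum. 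Producing such a representative $f$ by successive folds, collapses of invariant subforests, subdivisions, and Stallings-type moves is the first and hardest technical ingredient.

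Once a relative train track representative is in hand, I would estimate $\text{rk}(Fix(\phi))$ stratum by stratum. For polynomially growing strata the Nielsen-fixed loops admit an explicit combinatorial description (arising from fixed edges and linear twists) whose contribution to the rank is directly countable. For an exponentially growing stratum the train track condition forces any Nielsen-fixed loop crossing the stratum to be a legal concatenation with rigid structure dictated by the Perron--Frobenius eigenvector, so its homotopy class is determined by a bounded amount of data: its entrance/exit gates and a finite set of indivisible Nielsen paths. A graph-theoretic index inequality, analogous to the Euler characteristic estimate appearing in Thurston's surface theory, then bounds the total contribution of each stratum, and summing along the filtration yields $\text{rk}(Fix(\phi)) \leq n$.

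The most delicate point, and the one I expect to dominate the proof, is the accounting in the exponential strata: ruling out accumulation of fixed classes through indivisible Nielsen paths of unbounded length, and combining per-stratum estimates consistently with the recursive filtration (including the contributions of fixed vertices that glue the strata together). The rest of the argument is essentially formal once the relative train track technology and the attendant index inequality are established.
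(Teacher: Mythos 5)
First, a point of comparison: the paper does not prove this statement at all. Theorem \ref{thm:scott conj} is the Scott conjecture, quoted from Bestvina--Handel \cite{BH} purely as background, so there is no internal proof to measure yours against; what you have written is an outline of the strategy of \cite{BH} itself, and as such it correctly identifies the architecture (relative train track representatives, stratified analysis of Nielsen classes, an index-type inequality).

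As a proof, however, the proposal has a genuine gap --- two, in fact, and you flag both yourself. The existence of (stable) relative train track representatives and the index inequality controlling the contribution of indivisible Nielsen paths in exponentially growing strata are not off-the-shelf lemmas: together they constitute essentially the entire content of the Bestvina--Handel paper, and neither is argued here beyond the assertion that it can be done. There is also a subtlety you gloss over in reducing $\text{rk}(Fix(\phi))$ to Nielsen-fixed \emph{conjugacy} classes: $Fix(\phi)$ is a based object, an element lies in it only if a based lift of the corresponding loop is fixed rel basepoint (not merely freely homotopic to its image), and the rank bound in \cite{BH} is ultimately extracted from counting fixed directions and indivisible Nielsen paths at a fixed point of a lift of $f$, not from an Euler characteristic count of fixed free homotopy classes. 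Without constructing the stable relative train track map and without a precise statement and proof of the index inequality, the argument does not establish the theorem; it is a faithful roadmap of a known, long, and difficult proof rather than a proof.
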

In a recent paper by  B.J. Jiang, S. D. Wang and  Q. Zhang \cite{JWZ}, it
is proved that
\begin{thm}\label{thm:scott conj1}
For each automorphisms $\phi$ on a compact surface group
$G=\pi_{1}(S)$, $$\text{rk}(Fix(\phi))\leq \text{rk}(G).$$
\end{thm}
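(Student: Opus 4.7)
The idea is to geometrically realize $\phi$ by a surface homeomorphism, decompose along a Nielsen--Thurston reducing system, and combine the resulting piecewise fixed-subgroup data via Bass--Serre theory.

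One first reduces to the main case. If $\partial S\neq\emptyset$ then $\pi_{1}(S)$ is free and Theorem~\ref{thm:scott conj} applies directly; if $S$ is non-orientable, one passes to the orientation double cover and handles the index-two overhead by a short direct argument; the cases where $S$ has genus $\le 1$ are abelian and elementary. So assume $S$ is a closed orientable surface of genus $g\ge 2$. By Dehn--Nielsen--Baer the automorphism $\phi$ is induced by a self-homeomorphism $f$ of $S$ fixing a basepoint $x_{0}$, and by the Nielsen--Thurston classification one may assume, after isotopy fixing $x_{0}$, that $f$ preserves a canonical multicurve $\Gamma\subset S$ such that (a power of) the restriction $f_{i}$ to each component $S_{i}$ of $S\setminus\Gamma$ is either periodic or pseudo-Anosov. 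This presents $\pi_{1}(S)$ as the fundamental group of a graph of groups with vertex groups $\pi_{1}(S_{i})$ and cyclic edge groups, on which $\phi$ acts compatibly.

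The main step is to bound $Fix$ on each piece. On a pseudo-Anosov piece, any nontrivial element of $Fix(\phi_{i})$ must commute with a lift of $f_{i}$ acting on $\widetilde{S}_{i}$, which forces it to fix the pair of attracting/repelling endpoints on $\partial_{\infty}\widetilde{S}_{i}$; consequently such an element must be peripheral, and $\text{rk}(Fix(\phi_{i}))$ is at most the number of $f$-fixed boundary components of $S_{i}$. On a periodic piece a power of $\phi_{i}$ is inner, so $Fix(\phi_{i})$ is essentially the fundamental group of a quotient orbifold, with the desired rank estimate. Annular (edge) groups contribute at most one each. A Bass--Serre combination, together with $\chi(S)=\sum_{i}\chi(S_{i})$ and $2g-2=-\chi(S)$, should assemble these local bounds into $\text{rk}(Fix(\phi))\le 2g=\text{rk}(\pi_{1}(S))$.

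The hardest part will be this combining step: one must track how elements of different $Fix(\phi_{i})$ splice along cyclic edge groups when $\phi$ permutes or flips pieces, and then convert the crude bound produced by the graph-of-groups structure into the sharp inequality rather than a weaker Euler-characteristic style estimate. A secondary technical point lies in the pseudo-Anosov case: ruling out non-peripheral fixed elements requires careful analysis of the centralizer of the relevant lift of $f_{i}$ when the basepoint-fixing choice shifts which lift one is studying.
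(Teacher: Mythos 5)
First, a point of reference: the paper does not prove this statement at all --- Theorem~\ref{thm:scott conj1} is quoted from \cite{JWZ}, where it is the main result, so there is no in-paper proof to compare against. Your plan follows essentially the strategy of that reference (realize $\phi$ geometrically, pass to the Nielsen--Thurston canonical form, analyze the pieces, recombine), so the choice of machinery is reasonable; but as written the argument has a genuine gap at exactly the step you defer. The theorem concerns $Fix(\phi)$ for a \emph{specific} automorphism, whereas Dehn--Nielsen--Baer and the Thurston normal form only control the outer class, and automorphisms in the same outer class have wildly different fixed subgroups ($Fix(\mathrm{id})=G$, while $Fix(\mathrm{conj}_g)$ is the cyclic centralizer of $g$). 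So the object you must control is the centralizer in $\pi_1(S)$ of a chosen lift $\tilde f$ of the canonical representative, i.e.\ the data of a fixed point class, and the decomposition of $Fix(\phi)$ induced by the Bass--Serre tree of the reducing multicurve is a graph of groups whose vertex groups are intersections of $Fix(\phi)$ with \emph{conjugates} of the $\pi_1(S_i)$ and whose underlying graph can itself contribute free rank. To conclude $\text{rk}(Fix(\phi))\le 2g$ you must simultaneously bound (i) the ranks of those intersections, (ii) the number of vertex and edge orbits of the induced action, and (iii) the first Betti number of the quotient graph; the identity $\chi(S)=\sum_i\chi(S_i)$ gives none of (ii) or (iii), and this bookkeeping is precisely where the proof in \cite{JWZ} does most of its work. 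Writing ``should assemble'' there leaves the theorem unproved.

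Secondary but real issues: on a pseudo-Anosov piece the correct statement is that the centralizer of any lift is at most cyclic (a commuting deck transformation permutes the finite fixed set of the lift on the circle at infinity), not ``rank at most the number of $f$-fixed boundary components''; on a periodic piece you must split into the case $\phi_i^n=\mathrm{conj}_g$ with $g\ne 1$ (centralizer again cyclic) and the honestly finite-order case, which needs Nielsen realization and an orbifold-group centralizer computation; and the non-orientable reduction needs the observation that $Fix(\phi)$ is free or a surface group, so that an Euler-characteristic count converts the bound on its index-two orientable part into a bound on its own rank. None of these is fatal, but together with the unfinished assembly step the proposal is a road map rather than a proof of a genuinely nontrivial theorem.
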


It is obvious that the bounds given in Theorem \ref{thm:scott conj}
and Theorem \ref{thm:scott conj1} are sharp and can be acheived by
the identity maps.

In this note, we will address the similar problem for hyeprbolic
3-manifold groups. We call a compact 3-manifold $M$ is {\it
hyperbolic}, if $M$ is orientable, and the interior of $M$ admits a
complete hyperbolic structure of finite volume (then $M$ is either
closed or $\partial M$ is a union of tori). Therefore $G=\pi_1(M)$
is isomorphic a cofinite volume torsion free Kleinian group. A  main
observation in this paper is the following

\begin{thm}\label{main}
For each automorphism $\phi$ on a hyperbolic 3-manifold group
$G=\pi_1(M)$, $$\text{rk}(Fix(\phi))<2\text{rk}(G),$$
and the upper
bound is sharp when $G$ runs over all hyperbolic 3-manifold groups.
\end{thm}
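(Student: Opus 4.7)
The strategy is to reduce $Fix(\phi)$ to the centralizer of a normalizing element in the ambient Lie group $\text{Isom}^+(\mathbb{H}^3)\cong \text{PSL}(2,\mathbb{C})$, using Mostow--Prasad rigidity. First, by Mostow--Prasad rigidity applied to the finite-volume hyperbolic $3$-manifold $M$, every $\phi\in\Aut(G)$ is induced by a self-isometry $f$ of $M$. A lift $\tilde f\in \text{Isom}^+(\mathbb{H}^3)$ then normalizes $G=\pi_1(M)$ (acting as deck transformations) and satisfies $\phi(g) = \tilde f\, g\, \tilde f^{-1}$ for every $g\in G$. In particular $Fix(\phi) = C_G(\tilde f)$, reducing the problem to bounding the rank of this centralizer.

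Next I would carry out a case analysis on the type of $\tilde f$ in $\text{PSL}(2,\mathbb{C})$. If $\tilde f=1$, then $\phi=\text{id}$ and $Fix(\phi) = G$, so $\text{rk}(Fix(\phi)) = \text{rk}(G) < 2\,\text{rk}(G)$. If $\tilde f$ is loxodromic or elliptic, its centralizer in $\text{Isom}^+(\mathbb{H}^3)$ is $\mathbb{C}^* \cong \mathbb{R} \times S^1$ (up to a $\mathbb{Z}/2$ flip extension in the order-$2$ elliptic case that contributes only torsion), whose torsion-free discrete subgroups are cyclic, since discrete subgroups of $S^1$ are finite. If $\tilde f$ is parabolic, its centralizer in $\text{Isom}^+(\mathbb{H}^3)$ is $\mathbb{C}$, and $C_G(\tilde f)$ is a torsion-free discrete subgroup of $\mathbb{C}$, at most a full peripheral $\mathbb{Z}^2$ of $G$. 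Hence $\text{rk}(Fix(\phi)) \le 2$ whenever $\phi \ne \text{id}$; together with the identity case, and using that $\text{rk}(G) \ge 2$ for every finite-volume hyperbolic $3$-manifold group (a cusped manifold contains a peripheral $\mathbb{Z}^2$; a closed one has non-elementary $\pi_1$), this yields $\text{rk}(Fix(\phi)) \le \max(2, \text{rk}(G)) = \text{rk}(G) < 2\,\text{rk}(G)$. The case analysis simultaneously classifies $Fix(\phi)$ as one of $\{1\}$, $\mathbb{Z}$, $\mathbb{Z}^2$, or $G$.

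For the sharpness of the constant $2$, the identity automorphism already realizes $\text{rk}(Fix)=\text{rk}(G)$, which precludes any universal bound of the form $\text{rk}(Fix)<\text{rk}(G)$; hence among integer coefficients $2$ is the smallest that makes the stated strict inequality hold for all pairs $(G,\phi)$. The main obstacle I anticipate is the careful verification of the centralizer structures in each non-trivial $PSL(2,\mathbb{C})$-type, in particular ruling out non-cyclic torsion-free contributions from the rotational $S^1$ factor in the loxodromic and elliptic cases and from the order-$2$ flip coset in the rotation-by-$\pi$ elliptic case, together with checking that $\tilde f = 1$ occurs exactly when $\phi = \text{id}$, which relies on the triviality of the center of any cofinite volume Kleinian group.
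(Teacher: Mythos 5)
Your reduction of $Fix(\phi)$ to the centralizer $C_G(\tilde f)$ via Mostow rigidity is the same starting point as the paper, but there is a fatal gap: you place the lift $\tilde f$ in $\mathrm{Isom}^+(\mathbb{H}^3)\cong PSL(2,\mathbb{C})$ and run the case analysis only over identity, loxodromic, elliptic, and parabolic types. Mostow rigidity only guarantees $\tilde f\in \mathrm{Isom}(\mathbb{H}^3)$, and $\tilde f$ may be orientation reversing even though $M$ is orientable. The crucial missed case is $\tilde f$ a reflection in a totally geodesic plane $P$ (which occurs exactly when $\phi$ is induced by an orientation-reversing involution whose fixed-point set is a totally geodesic surface $S\subset M$): then $C_G(\tilde f)$ is the stabilizer of $P$ in $G$, i.e.\ $\pi_1(S)$, a surface group whose rank can be arbitrarily large. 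Consequently your conclusion $\mathrm{rk}(Fix(\phi))\le \mathrm{rk}(G)$ is actually \emph{false}: the paper's Theorem \ref{hyp counterex} constructs closed hyperbolic $M_n$ (doubles of manifolds with totally geodesic boundary) with $\mathrm{rk}(Fix(\phi_n))=2n-2$ while $\mathrm{rk}(\pi_1(M_n))\le n+3$. Handling this surface-group case is where all the real work of Theorem \ref{sharp bound} lies: the paper must prove $\mathrm{rk}(\pi_1(S))<2\,\mathrm{rk}(\pi_1(M))$ (Proposition \ref{surface rank}) by cutting $M$ along $S$, doubling, and combining the half-lives-half-dies lemma, peripheral subgroup separability, and a rank inequality for finite-index subgroups. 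None of this is replaceable by the centralizer computation in $PSL(2,\mathbb{C})$.

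Your sharpness argument is also insufficient. Observing that the identity automorphism realizes $\mathrm{rk}(Fix)=\mathrm{rk}(G)$ only rules out bounds below $\mathrm{rk}(G)$; it does not show that the constant $2$ is attained in the limit. If the true supremum of $\mathrm{rk}(Fix(\phi))/\mathrm{rk}(G)$ were, say, $3/2$, your reasoning would not detect it, and "smallest integer coefficient making the inequality true" is not what sharpness means here. Establishing sharpness requires an explicit family where the ratio tends to $2$, which is exactly the content of Theorem \ref{hyp counterex}: gluing $n$ truncated hyperbolic tetrahedra to build $P_n$ with connected totally geodesic boundary of genus $n-1$, doubling to get $M_n$, and using the normal form for amalgamated free products to identify $Fix(\phi_n)$ with $\pi_1(\partial P_n)$ while bounding $\mathrm{rk}(\pi_1(M_n))$ from above via a $2$-skeleton count.
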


Theorem \ref{main} is a conclusion of the following Theorems
\ref{hyp counterex}, and \ref{sharp bound}.

\begin{thm}\label{hyp counterex}
There exist a sequences  automorphisms $\phi_{n}: \pi_{1}(M_{n})\to
\pi_{1}(M_{n})$ on closed hyperbolic 3-manifolds $M_{n}$ such that
$Fix(\phi_n)$ is the group of a closed surface, and
$$\frac{\text{rk}(Fix(\phi_n))}{\text{rk}(\pi_{1}(M_{n}))}>
2-\epsilon \text{ as } n\rightarrow\infty$$ for any $\epsilon >0$.
\end{thm}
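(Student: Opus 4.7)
The strategy is to realize each $M_n$ as the metric double of a hyperbolic $3$-manifold $N_n$ with connected totally geodesic boundary $\Sigma_{g_n}$ of genus $g_n\to\infty$, and to take $\phi_n$ to be the automorphism induced by the involution swapping the two copies of $N_n$. For each $n$ one fixes a compact orientable hyperbolic $3$-manifold $N_n$ with connected totally geodesic boundary $\Sigma_{g_n}$; examples with arbitrarily high boundary genus exist by the work of Thurston and Kojima. Set $M_n := N_n\cup_{\Sigma_{g_n}}N_n$, the metric double. Since $\Sigma_{g_n}$ is totally geodesic the two hyperbolic metrics glue $C^\infty$-smoothly, so $M_n$ is a closed orientable hyperbolic $3$-manifold. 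The swap map $\tau_n\colon M_n\to M_n$ is an orientation-reversing isometric involution pointwise fixing $\Sigma_{g_n}$; set $\phi_n := (\tau_n)_\ast$ after choosing a base point on $\Sigma_{g_n}$.

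I would first verify $Fix(\phi_n)=\pi_1(\Sigma_{g_n})$. The containment $\pi_1(\Sigma_{g_n})\subseteq Fix(\phi_n)$ is immediate. Conversely, given nontrivial $\omega\in Fix(\phi_n)$, let $c\subset M_n$ be its unique closed geodesic representative; then $\tau_n(c)$ is freely homotopic to $c$ and hence coincides with $c$ as a set. The restriction $\tau_n|_c$ is an involution of $c\cong S^1$. If $\tau_n|_c$ is orientation-reversing on $c$, then $\phi_n(\omega)=\omega^{-1}$, forcing $\omega=1$ by torsion-freeness of $\pi_1(M_n)$. If it is orientation-preserving and nontrivial, it is a half-turn with no fixed points on $c$, which forces $c$ to lie in one component of $M_n\setminus\Sigma_{g_n}$, contradicting $\tau_n(c)=c$. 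Hence $\tau_n|_c=\mathrm{id}$, $c\subset\Sigma_{g_n}$, and $\omega\in\pi_1(\Sigma_{g_n})$.

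Next I would bound the ranks. By van Kampen, $\pi_1(M_n)\cong\pi_1(N_n)\ast_{\pi_1(\Sigma_{g_n})}\pi_1(N_n)$. By the half-lives-half-dies lemma, the image of $H_1(\Sigma_{g_n};\mathbb{Q})$ in $H_1(N_n;\mathbb{Q})$ has rank exactly $g_n$, so one can pick a generating set of $\pi_1(N_n)$ of size $\mathrm{rk}(\pi_1(N_n))$ whose first $g_n$ members lie in $\pi_1(\Sigma_{g_n})\subset\pi_1(N_n)$. The amalgamation identifies these $g_n$ boundary generators between the two copies, giving
\[
\mathrm{rk}(\pi_1(M_n))\;\le\;2\,\mathrm{rk}(\pi_1(N_n))-g_n.
\]
Since $\mathrm{rk}(Fix(\phi_n))=\mathrm{rk}(\pi_1(\Sigma_{g_n}))=2g_n$, this yields
\[
\frac{\mathrm{rk}(Fix(\phi_n))}{\mathrm{rk}(\pi_1(M_n))}\;\ge\;\frac{2g_n}{2\,\mathrm{rk}(\pi_1(N_n))-g_n},
\]
which tends to $2$ provided $\mathrm{rk}(\pi_1(N_n))/g_n\to 1$.

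The main obstacle is arranging this last property. Half-lives-half-dies gives $\mathrm{rk}(\pi_1(N_n))\ge g_n$, and equality is impossible for hyperbolic $N_n$ since it would force $N_n$ to be a handlebody; thus the task is to exhibit a sequence of hyperbolic $3$-manifolds with totally geodesic boundary of large genus whose ranks saturate this bound to leading order, i.e.\ $\mathrm{rk}(\pi_1(N_n))=g_n+o(g_n)$. I would implement this either by Kojima-type compression-body-like constructions with totally geodesic boundary and a bounded-size ``core,'' or by judiciously chosen finite covers of a fixed small-volume hyperbolic $3$-manifold with totally geodesic boundary, arranged so that the boundary genus grows linearly in the covering degree while the excess $\mathrm{rk}(\pi_1(N_n))-g_n$ grows only sublinearly.
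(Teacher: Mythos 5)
Your overall strategy --- double a hyperbolic $3$-manifold $N_n$ with connected totally geodesic boundary and use the swap involution --- is exactly the paper's, but your proof has a genuine hole at its center: you never produce the manifolds $N_n$. Everything hinges on exhibiting a sequence with connected totally geodesic boundary of genus $g_n\to\infty$ and $\mathrm{rk}(\pi_1(N_n))=g_n+o(g_n)$, and you explicitly defer this (``Kojima-type constructions'' or ``judiciously chosen finite covers'') without carrying either out. This is not a routine verification: half-lives-half-dies forces $\mathrm{rk}(\pi_1(N_n))\ge g_n$, and getting within $o(g_n)$ of that floor is the whole content of the theorem. The paper does it completely explicitly, by gluing $n$ truncated hyperbolic tetrahedra (Thurston's construction) so that the resulting $P_n$ is a genus-$(n+1)$ handlebody with two $2$-handles attached and with connected totally geodesic boundary of genus $n-1$; a direct handle/CW count on the double then gives $\mathrm{rk}(\pi_1(DP_n))\le n+3$ while the fixed surface contributes rank $2n-2$. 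Note also that your intermediate bound $\mathrm{rk}(\pi_1(M_n))\le 2\,\mathrm{rk}(\pi_1(N_n))-g_n$ is itself unjustified: that the image of $H_1(\Sigma_{g_n};\mathds{Q})$ in $H_1(N_n;\mathds{Q})$ has rank $g_n$ is a homological statement and does not let you choose a \emph{minimal} generating set of $\pi_1(N_n)$ with $g_n$ members lying in the subgroup $\pi_1(\Sigma_{g_n})$. The paper avoids this entirely by bounding the rank of the double from a handle decomposition (its Lemma 2.1) rather than from the amalgamated-product description.

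A smaller but real gap is in the verification $Fix(\phi_n)=\pi_1(\Sigma_{g_n})$. Passing to the closed geodesic representative only controls the free homotopy class, so your argument shows at best that every element of $Fix(\phi_n)$ is \emph{conjugate into} $\pi_1(\Sigma_{g_n})$ (and, in the orientation-reversing subcase, that $\phi_n(\omega)$ is conjugate to $\omega^{-1}$, not equal to it); neither conclusion pins down the based subgroup. The clean fixes are either combinatorial --- uniqueness of normal forms in $\pi_1(N_n)\ast_{\pi_1(\Sigma_{g_n})}\pi_1(N_n)$, which is what the paper uses --- or geometric via the universal cover: the lift of $\tau_n$ fixing the base point is the reflection in a totally geodesic plane of $\mathds{H}^{3}$, and a deck transformation commutes with that reflection if and only if it stabilizes that plane, i.e.\ lies in $\pi_1(\Sigma_{g_n})$.
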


\begin{thm}\label{sharp bound}
Suppose $\phi$ is an automorphism on $G=\pi_{1}(M)$, where $M$ is
a hyperbolic 3-manifold. Then $\text{rk}(Fix(\phi))<2\text{rk}(G)$.
\end{thm}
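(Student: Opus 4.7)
The strategy is to realize $\phi$ geometrically via Mostow--Prasad rigidity and identify $Fix(\phi)$ with a centralizer. Because $M$ is finite-volume hyperbolic, $\phi$ is induced by an isometry $f:M\to M$; after choosing a lift $\tilde f:\mathbb{H}^{3}\to\mathbb{H}^{3}$ satisfying $\phi(\omega)=\tilde f\omega\tilde f^{-1}$ for every deck transformation $\omega\in G$, one has $Fix(\phi)=C_{G}(\tilde f)$. Any element of $C_{G}(\tilde f)$ preserves the totally geodesic fixed set $Fix(\tilde f)\subset\mathbb{H}^{3}\cup\partial\mathbb{H}^{3}$, so the centralizer is forced into the $G$-stabilizer of this geometric object.

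\noindent The case analysis proceeds by the isometry type of $\tilde f$. If $Fix(\tilde f)$ has dimension at most $1$ in $\mathbb{H}^{3}$ (i.e.\ $\tilde f$ is elliptic, loxodromic, glide- or rotatory-reflection, or parabolic), then $C_{G}(\tilde f)$ lies in an axis-stabilizer (cyclic), a cusp stabilizer ($\mathbb{Z}^{2}$), or the stabilizer of an interior point (trivial, by torsion-freeness), forcing $\text{rk}(Fix(\phi))\le 2$. Since $G$ is non-elementary, $\text{rk}(G)\ge 2$, and hence $\text{rk}(Fix(\phi))\le 2\le\text{rk}(G)<2\,\text{rk}(G)$ in all these sub-cases. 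The trivial sub-case $\tilde f=\mathrm{id}$ gives $\phi=\mathrm{id}$ and $\text{rk}(Fix(\phi))=\text{rk}(G)<2\,\text{rk}(G)$.

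\noindent The substantive case is $\tilde f$ a reflection in a totally geodesic $2$-plane $P\subset\mathbb{H}^{3}$. Then $f$ is an orientation-reversing involution of $M$ whose fixed locus is the totally geodesic surface $S=P/\mathrm{Stab}_{G}(P)$, one has $C_{G}(\tilde f)=\pi_{1}(S)$, and $M$ is the metric double of $N:=M/\langle f\rangle$, giving the amalgamated splitting $G=\pi_{1}(N)\ast_{\pi_{1}(S)}\pi_{1}(N)$. When $S$ is closed of genus $g$, $\text{rk}(Fix(\phi))=2g$ and the claim reduces to the strict bound $g<\text{rk}(G)$. The Lefschetz fixed-point formula for $f$, combined with the identity $\mathrm{tr}(f_{\ast}|H_{2})=-\mathrm{tr}(f_{\ast}|H_{1})$ from Poincar\'e duality for an orientation-reversing involution on a closed oriented $3$-manifold, yields $\mathrm{tr}(f_{\ast}|H_{1}(M;\mathbb{Q}))=g$; a Mayer--Vietoris analysis of $M=DN$ then refines this to $b_{1}(M;\mathbb{Q})=g+2(b_{1}(N;\mathbb{Q})-g)\ge g$. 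Whenever this last inequality is strict one concludes $\text{rk}(G)\ge b_{1}(M;\mathbb{Q})>g$ and is done. The sub-case in which $S$ is non-compact with $n\ge 1$ cusps (so $M$ is cusped) proceeds in parallel: $\pi_{1}(S)$ is free of rank $2g+n-1$, while each cusp of $S$ contributes an independent $\mathbb{Z}^{2}$ peripheral summand to $G$, giving the analogous bound.

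\noindent\textbf{Main obstacle.} The delicate residual case is the equality $b_{1}(M;\mathbb{Q})=g$, equivalently $H_{1}(N;\mathbb{Q})$ spanned entirely by $\iota_{\ast}(H_{1}(S;\mathbb{Q}))$: rational homology alone yields only $\text{rk}(G)\ge g$, not strict. I would attack it by combining the $\mathbb{F}_{2}$-Smith inequality $g\le b_{1}(M;\mathbb{F}_{2})$ --- strict whenever $H_{1}(M;\mathbb{Z})$ carries any $2$-torsion --- with a Bass--Serre / Nielsen equivalence argument on the amalgam $\pi_{1}(N)\ast_{\pi_{1}(S)}\pi_{1}(N)$, showing that any candidate $g$-element generating set of $G$ would have to be Nielsen-equivalent to one contained in a single conjugate of $\pi_{1}(N)$, contradicting $G\supsetneq\pi_{1}(N)$ and producing the one extra generator needed to close the gap.
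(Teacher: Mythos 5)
Your reduction is the same as the paper's: realize $\phi$ by an isometry via Mostow rigidity, observe that $Fix(\phi)$ is a centralizer, dispose of all isometry types except the reflection in a totally geodesic plane (where $Fix(\phi)\cong\pi_{1}(S)$ for a totally geodesic surface $S$ fixed by an orientation-reversing involution), and then try to prove $\text{rk}(\pi_{1}(S))<2\,\text{rk}(G)$. Up to that point the proposal is sound. But the theorem's entire content lives in the step you label the ``main obstacle,'' and there you have a genuine gap: your Lefschetz/Poincar\'e-duality computation only yields $g\le b_{1}(M;\mathbb{Q})\le\text{rk}(G)$, and you explicitly concede that when equality holds you have no proof, only a plan (an $\mathbb{F}_{2}$-Smith inequality that is not obviously strict, plus a Nielsen-equivalence analysis of generating sets of an amalgam). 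Rank computations for amalgamated products are notoriously delicate, and nothing in your sketch shows that a $g$-element generating set must collapse into one vertex group; as written, the strict inequality is asserted, not proved.

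The paper closes exactly this gap by a different mechanism. It cuts $M$ along $S$ to get $M'$ with $S$ an incompressible boundary component, uses Stallings' fibration theorem to see that $\pi_{1}(S)\to\pi_{1}(M')$ is not onto (else $M'\cong S\times[0,1]$ and $M$ would contain essential tori), invokes separability of peripheral subgroups (Long--Niblo) to produce a finite cover $\tilde M'\to M'$ of degree $n$ in which the preimage of $S$ has $m>1$ components, and then applies the half-lives-half-dies lemma in the cover together with the inequality $\text{rk}(G)\ge(\text{rk}(H)+n-1)/n$ for an index-$n$ subgroup $H$. This gives $\text{rk}(\pi_{1}(M'))\ge g+(m-1)/n>g$, which is precisely the strict ``extra generator'' you were missing. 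Separately, your structural claim that $M$ is the metric double of $M/\langle f\rangle$ with $G=\pi_{1}(N)\ast_{\pi_{1}(S)}\pi_{1}(N)$ is only valid when $S$ is two-sided and separating; the paper must (and does) treat the non-separating case, where one gets an HNN-type decomposition and instead builds a double cover of $M$ from two copies of $M'$, and the case of a one-sided non-orientable fixed surface, which your proposal omits entirely. These omitted cases are not cosmetic: the rank of $\pi_{1}(S)$ and the boundary pattern of $M'$ change in each one, and the paper's case-by-case bookkeeping is where the uniform bound $<2\,\text{rk}(G)$ (rather than $<2\,\text{rk}(G)+1$, say) is actually secured.
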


The proof of Theorem \ref{hyp counterex} is self-contained up to
some primary (and elegant) facts on hyperbolic geometry and on
combinatoric topology and group theory. Roughly speaking each ($M_i,
\phi_i$) in Theorem \ref{hyp counterex} is constructed as follows:
We first construct the hyperbolic 3-manifold $P_{i}$ with connected
totally geodesic boundary. Then we double two copies of $P_i$ along
their boundaries to get the closed hyperbolic 3-manifold $M_{i}$.
The reflection of $M_i$ alone $\partial P_{i}$ will induce an
automorphism $\phi_{i}:\pi_{1}(M_{i})\rightarrow\pi_{1}(M_{i})$ with
$Fix(\phi)=\pi_{1}(\partial P_{i})$. In this process all involved
ranks are  carefully controlled,  we get the inequality in Theorem
\ref{hyp counterex}.

To prove  Theorem \ref{sharp bound}, besides some combinatoric
arguments on topology and on group theory, we need the following
Theorem \ref{classifying theorem in torsion free} which classify all
possible groups $Fix(\phi)$ for automorphisms $\phi$ on hyperbolic
3-manifold groups. Recall that each automorphism $\phi$ on
$\pi_1(M)$ can be realized by an isometry $f$ on $M$ according to
Mostow rigidity theorem.

\begin{thm}\label{classifying theorem in torsion free}
Suppose $G=\pi_{1}(M)$, where $M$ is a hyperbolic 3-manifold, and
$\phi$ is a automorphism of G. Then $Fix(\phi)$ is one of the
following types: the whole group $G$; the trivial group
$\{e\}$; $\mathds{Z}$;  $\mathds{Z}\bigoplus\mathds{Z}$;  the
surfaces group $\pi_{1}(S)$, where $S$ can be  orientable
or not, and closed or not. More precisely
\begin{itemize}
\item[(1)] Suppose $\phi$ is induced by an orientation preserving isometry.
\item[(i)]  $Fix(\phi)$ is either $\mathds{Z}$, or $\mathds{Z\bigoplus\mathds{Z}}$, or $G$, or
$\{e\}$; moreover
\item[(ii)] if $M$ is closed, then $Fix(\phi)$ is either $\mathds{Z}$ or $G$;
\item[(2)] Suppose $\phi$ is induced by an orientation reversing isometry $f$.
\item[(i)] If $\phi^{2}\neq id$, then $Fix(\phi)$ is either $\mathds{Z}$ or $\{e\}$;
\item[(ii)] if $\phi^{2}=id$, then $Fix(\phi)$ is either $\{e\}$, or the surface group $\pi_1(S)$, where the surface $S$   is  pointwisely fixed by
$f$.
\end{itemize}
\end{thm}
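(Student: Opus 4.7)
The strategy is to translate the question into the geometry of isometries of $\mathbb{H}^3$ and to compute centralizers in the Kleinian group $\Gamma = \pi_1(M)$. By Mostow rigidity, every automorphism $\phi$ of $G$ is realized by an isometry $f : M \to M$, and since $\mathrm{Isom}(M)$ is finite, $f$ has finite order. Fixing a lift $\tilde f \in \mathrm{Isom}(\mathbb{H}^3)$, conjugation by $\tilde f$ preserves $\Gamma$ and induces $\phi$, so
\[
Fix(\phi) = C_\Gamma(\tilde f),
\]
the centralizer of $\tilde f$ in $\Gamma$. The proof then proceeds by case analysis on the dynamical type of $\tilde f$ in $\mathrm{Isom}(\mathbb{H}^3)$.

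For orientation-preserving $\phi$, the non-identity possibilities for $\tilde f$ are elliptic or loxodromic (with axis $\tilde\ell \subset \mathbb{H}^3$) or parabolic (fixing one point $\xi \in \partial\mathbb{H}^3$). In the first two cases the centralizer in $\mathrm{Isom}^+(\mathbb{H}^3)$ is the two-dimensional abelian subgroup $\mathbb{R}\times SO(2)$ of isometries preserving $\tilde\ell$ orientationally, and its intersection with the discrete torsion-free $\Gamma$ is $\mathbb{Z}$ or $\{e\}$. In the parabolic case the centralizer is the $\mathbb{R}^2$ of parabolic translations at $\xi$, and $\Gamma$ meets it in the rank-two cusp lattice, giving $\mathbb{Z}^2$ --- a case that occurs only when $M$ has cusps. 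For closed $M$, parabolics are excluded and one checks that the axis $\tilde\ell$ always has a non-trivial $\Gamma$-stabilizer (generated by $\tilde f^n \in \Gamma$ in the loxodromic case, or by the closed geodesic $\pi(\tilde\ell) \subset M$ in the elliptic case), ruling out $\{e\}$.

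For orientation-reversing $\phi$ with $\phi^2 = id$, the involution $\tilde f$ has a fixed point in $\mathbb{H}^3$ by a convexity argument on $\{p,\tilde f(p)\}$, and being orientation-reversing its fixed set is either a totally geodesic plane $\tilde H$ (reflection) or a single point (rotoinversion). In the reflection case I would show $C_\Gamma(\tilde f) = \mathrm{Stab}_\Gamma(\tilde H)$: commuting with $\tilde f$ forces preserving $Fix(\tilde f) = \tilde H$, and conversely any $\gamma \in \Gamma$ preserving $\tilde H$ gives a conjugate $\gamma\tilde f\gamma^{-1}$ that fixes $\tilde H$ pointwise and hence equals $\tilde f$. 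This identifies $C_\Gamma(\tilde f)$ with $\pi_1(S)$, where $S = \tilde H / \mathrm{Stab}_\Gamma(\tilde H)$ is the totally geodesic surface pointwise fixed by $f$, possibly non-orientable when some element of $\Gamma$ swaps the two sides of $\tilde H$. For rotoinversion the fixed point has trivial $\Gamma$-stabilizer, so $C_\Gamma(\tilde f) = \{e\}$. When $\phi^2 \neq id$, I would use $C_\Gamma(\tilde f) \subseteq C_\Gamma(\tilde f^2)$ and apply part (1) to the non-trivial orientation-preserving $\tilde f^2$; the only sub-case threatening a centralizer larger than $\mathbb{Z}$ is the parabolic one, but there conjugation by $\tilde f$ acts on the cusp $\mathbb{Z}^2$ as an involution of determinant $-1$ (since $\tilde f$ reverses orientation of the cusp torus cross-section), whose fixed subgroup has rank at most one.

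The main obstacle I anticipate is the reflection case: carefully establishing the equality $C_\Gamma(\tilde f) = \mathrm{Stab}_\Gamma(\tilde H)$ while accommodating the side-swapping elements of $\Gamma$, and identifying the quotient $S$ as a totally geodesic surface in $M$ of the types listed (with non-orientability appearing exactly when an element of $\Gamma$ swaps the two sides of $\tilde H$). The secondary technicality is eliminating the $\{e\}$ possibility in (1)(ii), where one must verify that every elliptic lift of a non-trivial finite-order isometry of a closed $M$ has its axis non-trivially stabilized by $\Gamma$.
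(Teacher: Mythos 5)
Your proposal is correct and takes essentially the same approach as the paper: Mostow rigidity turns $Fix(\phi)$ into the centralizer in $\Gamma$ of the realizing isometry, followed by the same case analysis on whether that isometry is trivial, loxodromic/elliptic, parabolic, a plane reflection, or a point inversion. The only cosmetic difference is in case (2)(i), where the paper rules out $\mathds{Z}\bigoplus\mathds{Z}$ by an explicit glide-reflection computation on the plane $z=0$, which is just a concrete form of your determinant $-1$ argument on the cusp lattice.
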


Theorem \ref{classifying theorem in torsion free} is proved by using
the algebraic version Mostow Rigidity theorem, as well as some
hyperbolic geometry and   covering space argument.

The paper is organized as follows: In Section 2 we will prove
Theorem \ref{hyp counterex}, and we also generalize the examples
from closed hyperbolic 3-manifolds to hyperbolic 3-manifolds with
cusps. Theorem \ref{classifying theorem in torsion free} and Theorem
\ref{sharp bound} will be proved in Section 3 and Section 4
respectively.

Suppose a compact 3-manifold $M$ is hyperbolic and $S$ is a proper
embedded surface in $M$.  We say $S$ totally geodesic surface
implies that $S^o$, the interior of $S$, is totally geodesic, and
call $\partial S$ the boundary of $S^o$. Below we will use the same
$M $ ($S$) to present the interior of $M$ ($S$).

For terminologies not defined,  see \cite{He1} and \cite{Th1} for
geometry and topology of 3-manifolds, and  see \cite{SW}for group
theory.

 \bigskip\noindent\textbf{Acknowledgement}. The first author was partially supported
 by Beijing International Center of Mathematical Research.
    The second author was partially supported by grant
    No.11071006 of the National Natural Science Foundation of China.
    The authors  thank Ian Agol, David Gabai, Boju Jiang, and Hao Zheng
    for valuable communications  and suggestions.


\section{Proof of Theorem \ref{hyp counterex}}\label{Sec:example}

In this section, we construct examples stated in Theorem
\ref{hyp counterex}. Roughly speaking those examples are constructed
as follow: we first construct a hyperbolic 3-manifold $P$ with
totally geodesic boundary. Then we double it to get a closed hyperbolic 3 manifold $DP$. Now if we choose the base point
on the boundary of $P$, the reflection along $\partial P$ will
induce $\phi$ on the fundamental group of $DP$, and this automorphism $\phi$ will
have the property we desired.

There are different approaches to construct  hyperbolic manifolds
with totally geodesic boundaries. We will use the most original and the most direct one
due to Thurston. (For another  approach see Remark \ref{other appraoch}) .

In Thurston's Lecture Notes (Section 3.2 of \cite{Th1}), there is a
very concrete and beautiful construction of hyperbolic 3-manifolds
with totally geodesic boundaries involving primary  hyperbolic
geometry only.

In 3-dimensional hyperbolic space $H^{3}$,  there is a one-parameter
family of truncated hyperbolic tetrahedron as  in Figure 1: Each of
its 8 faces is totally geodesic; each of its 18 edges is geodesic
line segment. There are 4 triangle faces and 4 hexagon faces. The 12
edges of the 4 triangle faces have the same length, and  the remain
6 edges, we call them "inner edge", also have the same length. The
triangle faces are perpendicular to the hexagon faces. The angles
between hexagon faces are all equal and can be arbitrary angles
between $(0^{\circ},60^{\circ})$.

\begin{center}
\scalebox{0.5}{\includegraphics[0pt,0pt][363pt,326pt]{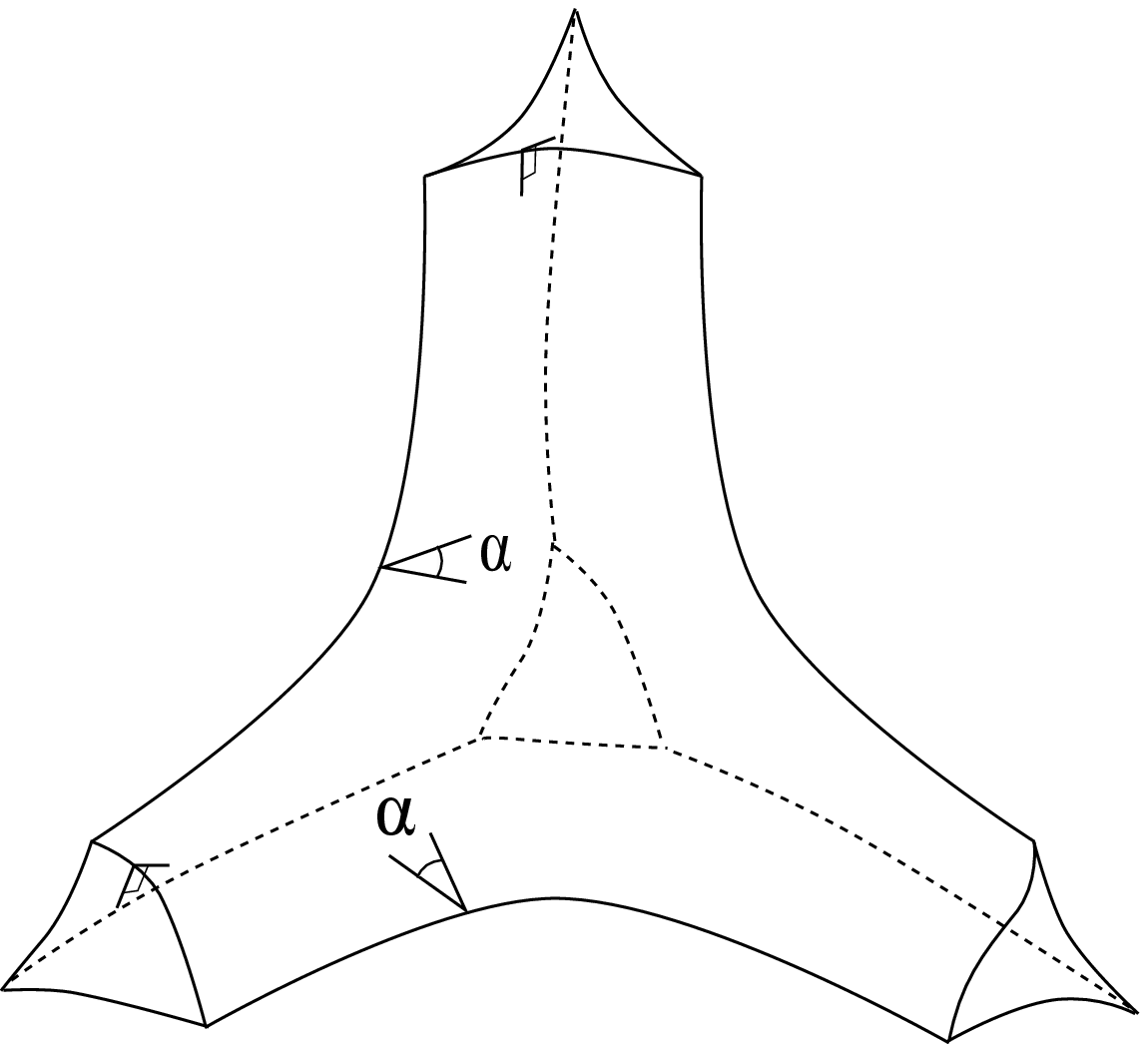}}

Figure 1
\end{center}

We will use those simplices to construct a hyperbolic manifold with
totally geodesic boundaries. Suppose we have some copies of
tetrahedron. We pair the faces of tetrahedron and gluing them
together (therefore some edges and vertexes are also  glued
together). After gluing, if we remove a neighborhood of the vertex,
we will get a topological manifold $P$. A tetrahedron with its
vertex neighborhood removed is homeomorphic to the truncated simplex
mention above. Suppose every $k$ edges of the tetrahedron are glued
together $ (k>6)$. We can set the face angle $\alpha$ of the
truncated simplex to be $\frac{2\pi}{k}$. Then the hyperbolic
structure of the truncated simplex fix together to give the
hyperbolic structure of $P$, and the triangle faces of the truncated
simplex are matched together to form the totally geodesic $\partial
P$.
It is easy to see that the number of vertex of tetrahedron (after gluing) equals the number of the boundary component. \\

Moreover, if we remove the neighborhood of the inner edges in $P$.
We will get a handlebody $H$. To see this, we remove the
neighborhood of the 6 edges of a tetrahedron. Topologically, it is
homeomorphic to $D^{3}$ and the 4 tetrahedron faces  are 4 disjoint
disks on $\partial D^{3}$. Then, we glue them together. If we glue
some 3 balls alone disks on their boundary, we get a handlebody. So
$P$ can be obtained by attaching $m$ 2-handles on a handlebody of
genus $n+1$. It is easy to see that $m$ is the number of inner edges
after gluing and $n$ is the number of tetrahedron.

Now we double $P$ along its boundary to get a closed hyperbolic
manifold $DP$. We have to control the rank of $\pi_{1}(DP)$. This is
done in the following lemma.

\begin{lem}\label{rank control}
Suppose $P$ is obtained by attaching $l$-handles to a handlebody of
genus $k$. Then $\text{rk}(\pi_{1}(DP))\leq k+l$($DP$ is the double
of $P$).
\end{lem}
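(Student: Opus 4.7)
The plan is to build an explicit handle decomposition of the closed $3$-manifold $DP$ and then read off a presentation of $\pi_{1}(DP)$ from it. Since a handle decomposition gives a presentation of the fundamental group with one generator per $1$-handle and one relation per $2$-handle (the $0$-handle providing the basepoint and the $3$-handles being invisible to $\pi_{1}$), it suffices to produce such a decomposition of $DP$ with at most $k+l$ one-handles.

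First I would set up the handle decomposition of $P$ itself. By hypothesis, $P$ is a handlebody of genus $k$ with $l$ two-handles attached, and a genus $k$ handlebody carries the standard handle decomposition with one $0$-handle and $k$ $1$-handles. This endows $P$ with a handle decomposition having exactly one $0$-handle, $k$ $1$-handles, $l$ $2$-handles, and no $3$-handles (since $\partial P \neq \emptyset$).

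Next I would use handle duality to describe the contribution of the second copy $P'$ in the amalgamation $DP = P \cup_{\partial P} P'$. Viewing $P'$ as being built from its boundary $\partial P' = \partial P$ inward by the \emph{dual} handle decomposition, each $j$-handle of $P'$ becomes a $(3-j)$-handle attached to $\partial P$. Thus attaching $P'$ to $P$ contributes $l$ new $1$-handles (duals of the $2$-handles of $P'$), $k$ new $2$-handles (duals of the $1$-handles), and one $3$-handle (dual of the $0$-handle). Combining with the previous step, $DP$ admits a handle decomposition with one $0$-handle, $k+l$ $1$-handles, $k+l$ $2$-handles, and one $3$-handle. This is consistent with $\chi(DP)=0$ as a sanity check.

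Reading off the associated presentation of $\pi_{1}(DP)$, the $k+l$ one-handles contribute $k+l$ generators while the $k+l$ two-handles contribute relations; the single $3$-handle plays no role in $\pi_{1}$. Hence $\pi_{1}(DP)$ admits a presentation on $k+l$ generators, which forces $\text{rk}(\pi_{1}(DP)) \leq k+l$. The only substantive ingredient is the dual handle picture in the third paragraph; once this is in place the rank estimate is automatic, and no van Kampen computation with $\pi_{1}(\partial P)$ (a genus $\geq 2$ surface group, which would be awkward to amalgamate over directly) is required.
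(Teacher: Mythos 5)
Your proof is correct, but it takes a genuinely different route from the paper's. The paper works with the explicit decomposition $DP=(H\cup_{\partial H-\bigcup_i N(\gamma_i)}H')\cup_i S_i$: it first observes that attaching the solid tori $S_i$ (the doubles of the $2$-handles) along torus boundary components does not raise the rank, and then bounds $\mathrm{rk}\,\pi_1(H\cup H')$ by building a $2$-complex by hand --- two copies of a genus-$k$ surface glued along the complement of the annuli $N(\gamma_i)$, which amounts to attaching $l$ annuli (each contributing one new generator) and then compressing disks (contributing none). Your argument instead invokes the standard handle-duality description of a double: the decomposition of $P$ with one $0$-handle, $k$ $1$-handles and $l$ $2$-handles is completed to a decomposition of $DP$ by attaching the handles of $P'$ upside down, yielding $k+l$ one-handles in total and hence a presentation of $\pi_1(DP)$ on $k+l$ generators. (Implicitly you are also reordering handles by index, which is legitimate by the usual isotopy of attaching maps.) Both arguments are counting the same things --- your $l$ dual $1$-handles are exactly the paper's $l$ annuli, and your $k$ dual $2$-handles are its compressing disks on the second side --- but your version is the cleaner, more standard packaging and avoids any ad hoc analysis of the $2$-skeleton; the paper's version has the mild advantage of making visible the solid tori $S_i$ and the glued subsurface $\partial H-\bigcup_i N(\gamma_i)$, which the authors reuse almost verbatim in the cusped modification (Proposition 2.4).
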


\begin{proof}
Suppose $P$ is obtained from a handlebody  $H$ of genus $k$ by
attaching $l$ 2-handles $h_{1},h_{2}.....,h_{l}$ with attaching
curves $\gamma_{1},\gamma_{2},.....,\gamma_{l}$, where
$\gamma_{1},\gamma_{2},.....,\gamma_{l}$ are disjoint simple closed
curves on $\partial H$, and for each 2-handle $D^2\times I$,
$\partial D\times I$ is identified with the attaching  region
$N(\gamma_{i})$,  the regular neighborhood of $\gamma_{i}$, for some
$i$. Then we have

 $$P=H\bigcup_{\{ N(\gamma_i)\}} \{ h_{i}\}.$$

Note  in the doubling $DP$, the two copies of handlebody $H$ and
$H^{\prime}$ are glued together along $\partial H-\bigcup_{i}
N(\gamma_{i})$, and each two copies $D^{2}\times I$ of the 2-handle
$h_{i}$  are glued  alone the $D^{2}\times \partial I$ to get a
solid torus $S_{i}$, which is attached to $H\bigcup_{\partial H-\bigcup_{i}
N(\gamma_{i})} H^{\prime}$ along the torus boundary formed by two copies of
$N(\gamma_{i})$. So we have

$$DP=(H\bigcup_{\partial H-\bigcup_{i}
N(\gamma_{i})} H^{\prime})\bigcup_{i}S_{i}.$$ Because attaching
solid torus along the torus boundary  of $H\bigcup_{\partial
H-\bigcup_{i} N(\gamma_{i})} H^{\prime}$ does not increase the rank
of the fundamental group, we just need to control
$\text{rk}(\pi_{1}(H\bigcup_{\partial H-\bigcup_{i} N(\gamma_{i})}
H^{\prime}))$. We consider the two skeleton of this space. The two
skeleton of the handlebody $H$ consists of a surface of genus $k$
and $k$ copies of compressing disks. So the two-skeletons of
$H\bigcup_{\partial H-\bigcup_{i} N(\gamma_{i})} H^{\prime}$ can be
obtained as follows: starting from a surface $S_{k}$ of genus $k$,
we glue two copies of $S_{k}$ along $S_{k}-\bigcup_{i}
N(\gamma_{i})$. This is equally to attach $l$ copies of annulus
along $\partial N(\gamma_{i}), i=1,2,...,l$. Then we glue $k$
compressing disks on both side.

 Compared to the two-skeleton of $H$,
we see that only $l$ new generater are involved by the attached
annulus and the new attaching disk does not increase the rank of
fundamental group. So $\text{rk}(\pi_{1}(H\bigcup_{\partial
H-\bigcup_{i} N(\gamma_{i})} H^{\prime})) \leq k+l$. \end{proof}

\begin{center}
\scalebox{0.7}{\includegraphics*[0pt,0pt][451pt,294pt]{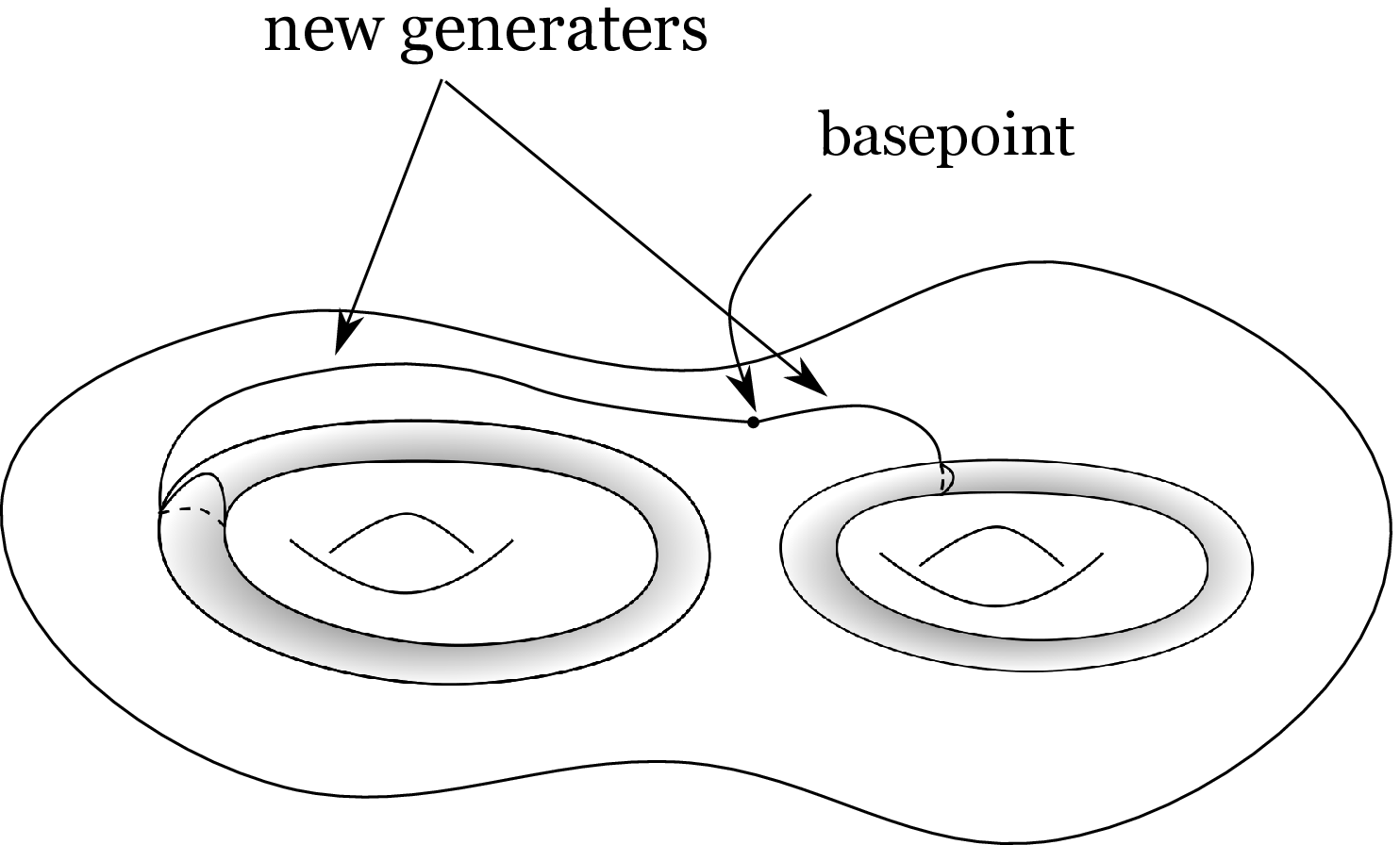}}

Figure 2
\end{center}

Now we can construct our examples. We start from $n$ ($n>3,3\nmid
n$) copies of the tetrahedron indicated in Figure 3, where the edges
are marked. We represent the faces by the edges around it. Each
tetrahedron $T _{i}$ has 4 faces
$(1,3,2)_{i},(4,5,3)_{i},(2,6,4)_{i},(5,1,6)_{i}$. Then we group the
$4n$ faces into 2n pairs:
$$[(1,3,2)_{i},(4,5,3)_{i+1}]; \,\,[(2,6,4)_{i},(5,1,6)_{i+1}],\,\,   i=1,2,....,n,\,
\text{ and } n+1\equiv1.$$

The two faces in each pair are glued together, and the orders of the
edges are preserved. (It's easy to see that the arrows on the edges are preserved too.) Then we get a simplex $X$.

\begin{center}
\scalebox{0.7}{\includegraphics*[0pt,0pt][211pt,222pt]{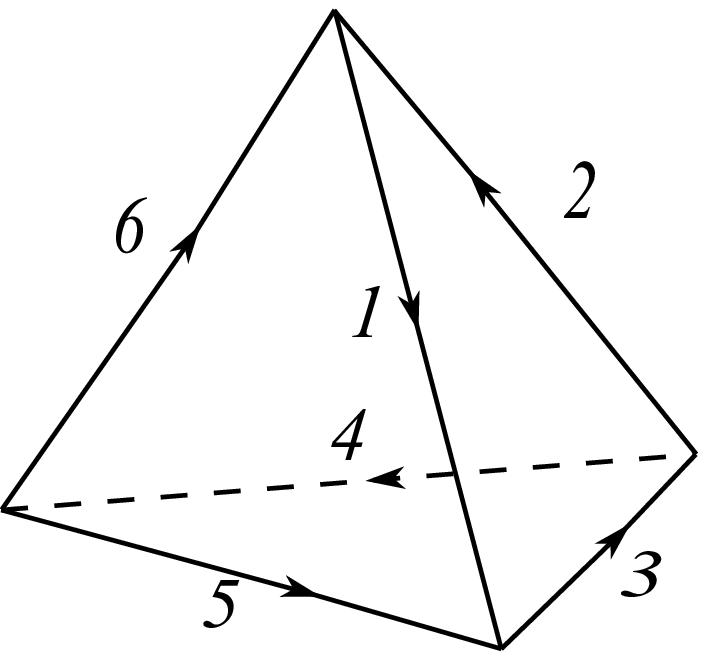}}

Figure 3
\end{center}

We write $a_{i}\leftrightarrow b_{j}$ to indicate that the edge $a$
in $T _{i}$ is glued together with  $b$ in $T _{j}$. With this
notation, we have

$$1_{k}\leftrightarrow 4_{k+1} \leftrightarrow 6_{k+2}\leftrightarrow
1_{k+3}; \,\, 2_{k}\leftrightarrow 5_{k+1} \leftrightarrow
3_{k}\leftrightarrow 2_{k-1}.$$

We first count the number of the edges after the gluing: Since
$3\nmid n$, we see that the $3n$ edges $1_{*},4_{*},6_{*}$ are glued
together, and the $3n$ edges $2_{*},5_{*},3_{*}$ are glued together,
so there are two edges in $X$.

Then  we count the number of the  vertices after the gluing: If we
denote the initial point and the terminal point of the directed edge
$i_k$
 by $I(i_k)$ and $E(i_k)$, then we have:
$$E(1_{k+1})\leftrightarrow I(3_{k+1})\leftrightarrow I(2_{k})\leftrightarrow I(4_{k})\leftrightarrow I(1_{k-1})\leftrightarrow E(2_{k-1}) \qquad (2.1)$$

The first, third and fifth identifications are shown in Figure 3.
The second and fourth identifications  follow from that respectively
$3_{k+1}$ and $2_{k}$, $4_{k}$ and $1_{k-1}$ are glued together as
direct edges. Since the two edges $[1_{*}],[2_{*}]$ after the
gluing. (2.1) implies that all ends of $[1_{*}],[2_{*}]$ are
identified to a point. Hence there is only one vertex in the simplex
$X$.

Finally we check the orientation: If we use the right hand
coordinate system, the face $(1,3,2)$ and $(5,1,6)$ correspond to
outward normal vectors while $(2,6,4)$ and $(4,5,3)$ correspond to
inward ones. Since each inward face is glued with an outward one,
the orientations are matched after gluing.

Now if we remove a regular neighborhood of the unique vertex, by the
discussion at the begin of this section, we get an orientable
hyperbolic three manifold $P$ with connected, totally geodesic
boundary.

As we have discussed,  $P$ can be constructed by attaching two
two-handles on a handle body of genus $n+1$. The genus of $\partial
P$ is $n-1$. Now double $P$ along its boundary to get a closed
3-manifold $DP$. If we choose a point $p\in \partial P$ as base
point, the reflection $f$ on $D(M)$ along $\partial P$ will induce
an automorphism $\phi:\pi_{1}(DP)\rightarrow\pi_{1}(DP)$.

\begin{lem}
In the construction above, $Fix(\phi)=Im(i_{*}(\pi_{1}(\partial
M)))$.
\end{lem}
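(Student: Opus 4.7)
The plan is to recognize $\pi_1(DP, p)$ as an amalgamated free product and then exploit a normal-form argument to pin down the fixed subgroup.

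By Seifert--van Kampen applied to the decomposition $DP = P \cup_{\partial P} P'$ with basepoint $p \in \partial P = P \cap P'$, and using that $\partial P$ is connected and $\pi_1$-injective in $DP$ (since it is totally geodesic in a hyperbolic manifold), one obtains
\[
\pi_1(DP,p) \;\cong\; \pi_1(P,p) *_{\pi_1(\partial P,p)} \pi_1(P',p).
\]
The reflection $f$ fixes $\partial P$ pointwise and exchanges $P$ with $P'$ isometrically, so $\phi = f_*$ is the identity on $\pi_1(\partial P)$ and is the ``swap'' isomorphism interchanging the two factors $\pi_1(P)$ and $\pi_1(P')$. The inclusion $i_*(\pi_1(\partial P)) \subseteq Fix(\phi)$ is then immediate from the pointwise fixing.

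For the reverse inclusion $Fix(\phi) \subseteq i_*(\pi_1(\partial P))$, I plan to invoke the normal form theorem for amalgamated free products, together with a $\phi$-equivariant choice of coset representatives. Choose a left transversal $S$ of $\pi_1(\partial P)$ in $\pi_1(P)$ with $1 \in S$, and set $S' := \phi(S) \subset \pi_1(P')$; since $\phi$ is the identity on $\pi_1(\partial P)$ and sends $\pi_1(P)$ isomorphically onto $\pi_1(P')$, the set $S'$ is a left transversal of $\pi_1(\partial P)$ in $\pi_1(P')$ with $1 \in S'$. By the normal form theorem, every $g \in \pi_1(DP,p)$ admits a unique expression
\[
g \;=\; c \cdot s_1 s_2 \cdots s_n,
\]
with $c \in \pi_1(\partial P)$, $n \geq 0$, each $s_i \in (S \cup S') \setminus \{1\}$, and consecutive $s_i, s_{i+1}$ drawn from different transversals. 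If $\phi(g) = g$, applying $\phi$ produces $g = c \cdot \phi(s_1) \cdots \phi(s_n)$, which is again in normal form because $\phi(S) = S'$, $\phi(S') = S$, and the alternation of factors is preserved. Uniqueness then forces $s_i = \phi(s_i)$ for every $i$; but $s_i$ and $\phi(s_i)$ lie in opposite transversals and $S \cap S' = \{1\}$, so each $s_i = 1$, contradicting $s_i \neq 1$. Therefore $n = 0$ and $g = c \in \pi_1(\partial P)$, as required.

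The only genuinely delicate point I anticipate is the equivariant choice $S' = \phi(S)$, which is exactly what allows the uniqueness half of the normal form theorem to be applied directly after twisting by $\phi$; everything else is formal bookkeeping.
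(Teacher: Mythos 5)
Your argument is correct and is essentially the paper's own proof: both identify $\pi_1(DP)$ as the amalgamated product $\pi_1(P)*_{\pi_1(\partial P)}\pi_1(P')$, choose the coset representatives in the second factor to be the $\phi$-images of those in the first, and then apply uniqueness of the normal form to conclude that a $\phi$-fixed element has trivial syllable part and hence lies in $\pi_1(\partial P)$. The only difference is cosmetic (you place the amalgamated-subgroup factor at the front with left transversals, the paper places it at the end with right cosets), which does not affect the argument.
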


\begin{proof} Note first that $P$ is boundary
imcompressible and $\pi_1(DP)$ is a free product of two copies of $\pi_1(M)$
amalgamated over their subgroup $\pi_1(\partial P)$, that is
$$\pi_1(DP)=\pi_1(M)_{\pi_1(\partial P)}*\pi_1(M),$$
and the amalgamation in induced from the doubling.

We will apply the standard form of elements in free product of groups with
amalgamations to prove the lemma. For convenient  we denote $\pi_1(M)_{\pi_1(\partial P)}*\pi_1(M)$ by  $G*_{H}*G'$, where
$G$ and $G'$ are two identical copies of $\pi_1(M)$ and $H$ is the $\pi_1(\partial P)$.
For each $g\in G$, denote $\phi(g)=g'$ (therefore $\phi(g')=g$) and clearly $\phi(h)=h$ for each $h\in H$.

 For each right coset $g_iH$ of $H$ in $G$, fix its representative $g_i$. We choose  the unit 1 as the representative for the right coset $H$ itself.
Then $\{g_i'H\}$ give the right coset decomposition of $H$ in $G'$ and fix representative $g'_i$ for  $g_i'H$.

According to  \cite[Theorem 1.7]{SW}, each  element $\gamma$ in  $G*_{H}*G'$ can be written uniquely in a form
$\gamma= a_1b'_1a_2b'_2...a_nb'_nh$, where $h\in H$, $a_{i}$ is some representative $g_j$ and $b_{i}$ is some representative $g_k$;
moreover $a_i=1$ implies $i=1$ and $b_i=1$ implies $i=n$. Then

$$\phi(\gamma)= a'_1b_1a'_2b_2...a'_nb_nh.$$

By uniqueness of the standard form, it is direct to see that if $\phi(\gamma)=\gamma$, then $\gamma=h$. hence only element in $H$
can be fixed by $\phi$.
\end{proof}

By Lemma 2.1, $\text{rk}(\pi_{1}DP)\leq n+3$. Since $\partial P$ has
genus $n-1$, by Lemma 2.2 $Fix(\phi)=Im(i_{*}(\pi_{1}(\partial
P)))\cong \pi_{1}(\partial P))$ has rank $2n-2$. For each $n>2$, construct such pair $(DP,\phi)$, and denoted as $(M_n,\phi_n)$. Then

$$ \frac{\text{rk}(Fix(\phi_n))}{\text{rk}(\pi_{1}(M_n))}\ge \frac{2n-2}{n+3}> 2-\epsilon, \text{ as }
n\rightarrow\infty$$ for any $\epsilon >0$. Hence we finished the
proof of Theorem \ref{hyp counterex}.\qed \vskip 0.5 truecm

The construction in Theorem \ref{hyp counterex} for closed
hyperbolic 3-manifold can be modified to the case of hyperbolic
3-manifold with cusps. Precisely

\begin{pro}\label{hyp counterex 2}
There exist a sequences of  hyperbolic  3-manifolds $M_{n}$ with
cusps and automorphisms $\phi_{n}:
\pi_{1}(M_{n})\rightarrow\pi_{1}(M_{n})$, such that $Fix(\phi_n)$
is a free group, and
$$\frac{\text{rk}(Fix(\phi_n))}{\text{rk}(\pi_{1}(M_{n}))}>
2-\epsilon \text{ as } n\rightarrow\infty.$$ for any $\epsilon >0$.
\end{pro}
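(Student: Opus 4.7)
The plan is to mirror the tetrahedral construction of Theorem \ref{hyp counterex}, but replace the fully truncated hyperbolic tetrahedra with \emph{semi-ideal} ones in which selected vertices are ideal (producing cusps) while the rest remain truncated (producing pieces of a totally geodesic boundary). Such semi-ideal tetrahedra arise in the same spirit as Thurston's truncated tetrahedra from Section 3.2 of \cite{Th1}: they form a multi-parameter family of hyperbolic polyhedra whose truncation triangles are perpendicular to the adjacent hexagon or pentagon faces, and whose dihedral angles along inner edges can be tuned within a suitable open range so that, after gluing, the total angle around each edge class equals $2\pi$ and the hyperbolic metrics match across identified faces.

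Once this geometric building block is in hand, I would glue $n$ copies of a semi-ideal tetrahedron by a combinatorial pattern modeled on Figure 3, but with one (or more) vertex of each tetrahedron designated ideal. The ideal vertices glue into cusps, while the triangular truncation faces glue into a single connected totally geodesic surface $\Sigma_n$ whose boundary components lie on the cusp tori, so $\Sigma_n$ is an orientable punctured surface. The resulting manifold $P_n$ is a cusped finite-volume hyperbolic 3-manifold with totally geodesic boundary $\Sigma_n$. Doubling along $\Sigma_n$ yields a cusped hyperbolic 3-manifold $M_n := DP_n$ carrying a reflection isometry, and the amalgamated-product normal-form argument of Lemma 2.2, applied to $\pi_1(M_n)=\pi_1(P_n) *_{\pi_1(\Sigma_n)} \pi_1(P_n)$, identifies $Fix(\phi_n)$ with the image of $\pi_1(\Sigma_n)$. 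Because $\Sigma_n$ is punctured, this fixed subgroup is free, as required.

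Finally one tracks ranks just as in Lemma 2.1. The handlebody-plus-2-handles decomposition of $P_n$ still applies: removing open neighborhoods of ideal vertices does not affect the rank estimates (up to an additive constant), giving $\text{rk}(\pi_1(M_n)) \leq Cn$ for some $C$ depending on the combinatorial pattern. Meanwhile, the genus $g_n$ and puncture count $p_n$ of $\Sigma_n$ both grow linearly in $n$, so $\text{rk}(Fix(\phi_n))=2g_n+p_n-1$ grows like $2n+O(1)$. Choosing the combinatorial pattern so that the leading coefficient of $\text{rk}(Fix(\phi_n))$ is essentially twice that of $\text{rk}(\pi_1(M_n))$, exactly as in the closed case, yields the desired asymptotic ratio $2-\epsilon$ for any $\epsilon>0$.

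The principal obstacle is the first step: verifying that a semi-ideal analogue of Thurston's one-parameter family of truncated tetrahedra exists with enough freedom in its dihedral angles to realize the prescribed edge-gluing pattern with total angles $2\pi$ at each edge class. Once the geometric existence is in place, the remainder of the argument is a direct combinatorial parallel to Section \ref{Sec:example}.
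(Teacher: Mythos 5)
Your overall strategy (build a cusped $P_n$ with totally geodesic boundary, double, apply the normal-form argument of Lemma 2.2, and track ranks) is sound in outline, but the step you yourself flag as the ``principal obstacle'' is a genuine gap, and it is not merely a technical verification. For the statement you need $Fix(\phi_n)$ to be a \emph{free} group, which forces the totally geodesic boundary surface $\Sigma_n$ to be non-compact (punctured), with its ends running into the cusps of $P_n$. But in a semi-ideal (partially truncated) tetrahedron with some vertices ideal and some truncated, the truncation triangle at a truncated vertex is a \emph{compact} geodesic triangle: its vertices are the finite intersection points of the truncation plane with the three adjacent faces, regardless of whether the opposite ends of the edges are ideal. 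So the naive gluing of truncation triangles produces a \emph{closed} totally geodesic boundary, and the fixed subgroup would be a closed surface group, not a free group. To force ideal vertices onto the truncation triangles you would need degenerate (length-zero) edges or annular cusps meeting the boundary surface, which is a substantially more delicate existence problem than tuning dihedral angles in a one-parameter family; you have not shown such building blocks exist, nor that they can be assembled with total angle $2\pi$ around each edge class while keeping the rank bookkeeping of Lemma 2.1 intact.

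The paper avoids this entirely by a different and much softer route: it starts from the \emph{closed} example $DP$ already built for Theorem 1.4, chooses a simple non-separating closed geodesic $\alpha$ in the totally geodesic surface $\partial P$ (disjoint from the 2-handle attaching regions), and drills it out. By the Kojima--Zhou theorem the complement $DP'=DP-\alpha$ is again finite-volume hyperbolic, by Thurston the restricted reflection can be made isometric in the new structure, and its fixed set $\partial P-\alpha$ is then automatically a totally geodesic punctured surface, so $Fix(\phi)\cong\pi_1(\partial P-\alpha)$ is free of rank $2n-3$ while $\mathrm{rk}(\pi_1(DP'))<n+4$. If you want to salvage your approach, you would either need to import the theory of partially truncated polyhedra with degenerate edges (and redo the angle and rank computations), or simply adopt the drilling argument, which reuses everything already proved in the closed case.
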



We give some theorems to prove Proposition \ref{hyp counterex 2}.

\begin{thm}\cite{Th2}
Suppose $M$ is a hyperbolic 3 manifold with finite volume and $f$ is
a involution of $M$. Than $M$ admit a hyperbolic structure with
finite volume such that $f$ is an isometric with respect to this
structure.
\end{thm}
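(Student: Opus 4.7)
The plan is to produce an $f$-invariant hyperbolic structure on $M$ by passing to the quotient orbifold and applying Thurston's geometrization theorem for 3-orbifolds. Form $\mathcal{O}=M/\langle f\rangle$; its underlying space is the topological quotient and its singular locus is the image of $\mathrm{Fix}(f)$. If $f$ is orientation-preserving the singular locus is a 1-submanifold carrying $\mathds{Z}/2$ isotropy, while if $f$ is orientation-reversing the singular locus is either empty (a free action) or a mirror surface. Because $\langle f\rangle$ acts properly with finite stabilizers, this endows $\mathcal{O}$ with the structure of a complete 3-orbifold of finite topological volume.

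Next I would verify that $\mathcal{O}$ meets the hypotheses of the orbifold hyperbolization theorem. It should be irreducible, atoroidal, and have only Euclidean cuspidal 2-suborbifolds: any essential spherical, toroidal or non-Euclidean end 2-suborbifold in $\mathcal{O}$ would lift along $M\to\mathcal{O}$ to an essential sphere, essential torus, or non-toroidal end in $M$, contradicting the fact that $\pi_1(M)$ is a finite covolume Kleinian group. The orbifold theorem from \cite{Th2} then equips $\mathcal{O}$ with a complete hyperbolic structure of finite volume.

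Lifting this hyperbolic metric along the degree-$2$ orbifold covering $M\to\mathcal{O}$ yields a complete finite-volume hyperbolic metric on $M$, and the non-trivial deck transformation of that cover, which is precisely $f$, is automatically an isometry of the lifted metric. Hence $M$ admits a hyperbolic structure with respect to which $f$ is isometric; Mostow rigidity further guarantees that the new metric is isometric to the given one, although only existence is required for the stated theorem.

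The main obstacle is invoking the orbifold geometrization theorem itself, which is a deep input from \cite{Th2}. Checking the topological hypotheses on $\mathcal{O}$ is elementary but has to be done case-by-case, distinguishing between the orientation-preserving and orientation-reversing behavior of $f$ and between the closed and cusped cases of $M$; in particular, in the cusped case one must be mindful that the cusp cross-sections of $M$ are tori, whose quotients under an involution can be either tori, Klein bottles, pillows, or $\mathbb{RP}^2$-with-cone-points, and each of these is nevertheless a Euclidean 2-orbifold, so the cuspidal hypothesis is automatic.
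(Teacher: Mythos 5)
The paper offers no proof of this statement at all: it is quoted as a black-box result attributed to \cite{Th2}, and the text moves straight on to its application. Your sketch --- form the quotient orbifold $M/\langle f\rangle$, check it is irreducible, atoroidal and has Euclidean cusp cross-sections, hyperbolize it by the orbifold geometrization theorem, and pull the metric back so that $f$ becomes the isometric deck transformation of a degree-two orbifold cover --- is exactly the standard argument underlying that citation, and it is correct in outline. Two caveats are worth recording. First, the orbifold geometrization theorem is far deeper than the announcement \cite{Th2}; complete proofs are due to Boileau--Leeb--Porti and Cooper--Hodgson--Kerckhoff, and the published statements are usually for orientable orbifolds with nonempty one-dimensional singular locus. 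The case the present paper actually needs (an orientation-reversing involution with two-dimensional fixed set) produces a quotient with mirror singular locus, so reducing to those statements requires an extra step (e.g.\ cutting along the fixed surface or working with the associated orbifold with boundary), and the free-action case instead needs ordinary geometrization of the quotient manifold, which may not be Haken. Second, your remark that Mostow rigidity identifies the new metric with the old one is correct and is the reason the theorem can be phrased as ``$f$ is conjugate to an isometry of the given structure.'' None of this is a gap in your reasoning so much as an honest accounting of how much machinery the paper's one-line citation conceals.
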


\begin{thm}\cite{Ko}\cite{Zh}
Suppose $M$ is a hyperbolic 3-manifold with finite volume and
$\alpha$ is a simple closed geodesic in $M$. Then $M-\alpha$ admit a
hyperbolic structure with finite volume.
\end{thm}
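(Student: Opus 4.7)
The plan is to drill a carefully chosen geodesic from each closed example $(M_n,\phi_n)$ of Theorem~\ref{hyp counterex}, thereby creating cusps without destroying the involution or most of its fixed surface. Recall that $M_n=DP_n$ with $\phi_n$ the reflection along the connected totally geodesic surface $\Sigma_n=\partial P_n$ of genus $n-1$, and $Fix(\phi_n)\cong\pi_1(\Sigma_n)$ has rank $2n-2$. Since $\Sigma_n$ inherits a hyperbolic structure of genus $n-1\geq 2$, it contains a non-separating simple closed geodesic $\alpha_n$, and because $\Sigma_n$ is totally geodesic in $M_n$, this $\alpha_n$ is simultaneously a simple closed geodesic of $M_n$.

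I would then set $M'_n = M_n \setminus \alpha_n$. By the Kojima--Zhao theorem (\cite{Ko}, \cite{Zh}), $M'_n$ admits a complete finite-volume hyperbolic structure. The involution $\phi_n$ fixes $\alpha_n$ pointwise, so it descends to a topological involution $\phi'_n$ of $M'_n$; by Thurston's theorem (\cite{Th2}), the hyperbolic metric on $M'_n$ may be chosen so that $\phi'_n$ is an orientation-reversing isometry. Fixing a base point on $\Sigma_n\setminus\alpha_n$ then promotes $\phi'_n$ to an order-two automorphism of $\pi_1(M'_n)$.

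To identify $Fix(\phi'_n)$, I would invoke Theorem~\ref{classifying theorem in torsion free}(2)(ii): since $\phi'_n$ is an orientation-reversing involution, its fixed subgroup is either trivial or $\pi_1(S)$, where $S$ is the surface pointwise fixed by $\phi'_n$. Topologically $\phi'_n$ is just the restriction of the reflection along $\Sigma_n$, so its pointwise fixed set is $S=\Sigma_n\setminus\alpha_n$, a connected orientable surface of genus $n-2$ with two ends (because $\alpha_n$ is non-separating). Thus $\pi_1(S)$ is free of rank $2n-3$, and the inclusion $\pi_1(S)\hookrightarrow\pi_1(M'_n)$ is injective because $\Sigma_n$, being totally geodesic in $M_n$, is $\pi_1$-injective there, and this injectivity survives the drilling. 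Consequently $Fix(\phi'_n)$ is the desired free group of rank $2n-3$.

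For the rank ratio, drilling a single simple closed geodesic from a closed 3-manifold increases the rank of the fundamental group by at most one (via a van Kampen argument adding a meridian generator), so combined with Lemma~\ref{rank control},
$$\text{rk}(\pi_1(M'_n))\leq \text{rk}(\pi_1(M_n))+1\leq n+4.$$
Hence
$$\frac{\text{rk}(Fix(\phi'_n))}{\text{rk}(\pi_1(M'_n))}\geq \frac{2n-3}{n+4}\longrightarrow 2\quad(n\to\infty),$$
which exceeds $2-\epsilon$ for all sufficiently large $n$. The main subtlety lies in the interplay of the two hyperbolic metrics: $\phi_n$ is an isometry of the original metric on $M_n$, but the hyperbolic metric changes upon drilling; Thurston's theorem is precisely what lets us promote $\phi'_n$ to an isometry of the new metric, which in turn enables the classification theorem to pin down $Fix(\phi'_n)$ as $\pi_1$ of the pointwise-fixed surface rather than something smaller.
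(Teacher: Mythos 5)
There is a fundamental mismatch: the statement you were asked to prove is the drilling theorem itself (the result of Kojima and Zhou that the complement of a simple closed geodesic in a finite-volume hyperbolic $3$-manifold again carries a complete finite-volume hyperbolic structure), which the paper does not prove but imports from \cite{Ko} and \cite{Zh}. Your proposal never proves this; on the contrary, it explicitly invokes ``the Kojima--Zhao theorem'' as a black box, so as a proof of the statement it is circular. What you have actually written is, in substance, a proof of Proposition \ref{hyp counterex 2} (the cusped version of the counterexamples), and there your argument runs close to the paper's own: drill a non-separating simple closed geodesic $\alpha$ on the totally geodesic surface $\partial P$, use Thurston's result \cite{Th2} to make the induced involution an isometry of the new structure, identify the fixed subgroup with $\pi_1(\partial P-\alpha)$, and bound the rank of the drilled manifold. (Minor differences: the paper pins down the fixed subgroup by the amalgamated-product/normal-form argument rather than by Theorem \ref{classifying theorem in torsion free}, and it chooses $\alpha$ inside $\partial H$ away from the attaching regions $N(\gamma_1)\cup N(\gamma_2)$ so that the rank estimate of Lemma \ref{rank control} goes through; your ``add one meridian generator'' bound should be justified along those lines rather than by a bare van Kampen remark.)

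To actually prove the stated theorem you would need genuinely different input. The standard routes are: (a) cone-manifold deformation theory --- view $M$ as a hyperbolic cone-manifold with cone angle $2\pi$ along $\alpha$, deform the cone angle down to $0$, and show the deformation does not degenerate, which is precisely the content of Kojima's and Zhou's work; or (b) show that $M\setminus\alpha$ is irreducible, atoroidal, not Seifert fibered, with torus boundary (here the hypothesis that $\alpha$ is a \emph{geodesic} is what rules out essential tori and annuli in the complement), and then apply Thurston's hyperbolization theorem for Haken manifolds, followed by the observation that the resulting structure has finite volume because the boundary is a torus. Neither of these ingredients appears in your proposal, so as an argument for the quoted theorem it has a genuine gap: the key analytic/geometric content is assumed rather than established.
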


\begin{proof}  In the proof of Theorem \ref{hyp counterex}, the hyperbolic 3-manifold $P$ with connected totally
geodesic boundary is obtained by attaching  two 2-handles to a
handlebody $H$ along the attaching curve $\gamma_{1}$ and
$\gamma_{2}$. Now we choose a simple non-separating closed geodesic
$\alpha$ in $\partial P$ such that  $\alpha\subset
\partial H\setminus N(\gamma_{1})\cup N(\gamma_{2})$. Then $\alpha$ remains a geodesic in $D(P)$. Remove
$\alpha$ from the closed hyperbolic manifold $DP$, we get a new
hyperbolic manifold with a cusp by Theorem 2.5, denoted  by $DP'$.
The reflection $f$ on $D(P)$ along $\partial P$ defines a
restriction  on $DP'$, which is still an involution $f'$. By Theorem
2.6, $DP'$ admit a hyperbolic structure so that $f'$ is a isometry
under this hyperbolic structure. So as the fixed point set of an
isometry, the non-closed surface $\partial P-\alpha$ must be totally
geodesic, and therefore incompressible.

If we pick the base point on $\partial P-\alpha$ and consider the
automorphism $\phi$ induced by $f'$, the same combinational group
theory argument as before shows that $Fix(\phi)=\pi_{1}(\partial
P-\alpha)$. Because $\partial  P$ has genus $n-1$, $n$ is the same
as in the proof of Theorem \ref{hyp counterex}, $\partial P-\alpha$
is two punctured surface of genus $n-2$, hence $\pi_{1}(\partial
P-\alpha)$ is a free group of rank $2(n-2)+1=2n-3$.

Now we control $\text{rk}(\pi_{1}(DP'))$ via the same technique in
the proof of lemma 2.1: $DP'$ consist of two parts: the first part
is two copies of the handlebody $H$ glued  along $\partial
H\setminus N(\gamma_{1})\cup N(\gamma_{2})\cup N(\alpha)$; the
second part is two solid torus resulting from the doubling of the
2-handles. So the same argument as the proof of Lemma 2.1 shows that
$\text{rk}(\pi_{1}(DP'))<n+4$, and  we have

$$ \frac{\text{rk}(Fix(\phi))}{\text{rk}(\pi_{1}(DP'))}\ge \frac{2n-3}{n+4}> 2-\epsilon, \text{ as }
n\rightarrow\infty,$$ for any $\epsilon >0$.
\end{proof}

\begin{rmk}\label{other appraoch} There is another way to find hyperbolic 3-manifold with
totally geodesic boundary, which is based on a most profound result in the 3-manifold theory and a result on Heegaard splitting:

\begin{thm}\label{Geometrization}\cite{Th2}
Suppose $M$ is a compact 3-manifold $M$ with non-empty boundary and
infinite $\pi_1$. If $M$ contains no essential surface of genus
smaller than 2, Then $M$ admits a hyperbolic structure with totally
geodesic boundary.
\end{thm}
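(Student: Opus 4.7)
The plan is to reduce this to the hyperbolization theorem for closed Haken manifolds via the doubling trick. Namely, form the closed 3-manifold $DM$ obtained by gluing two copies of $M$ along $\partial M$ by the identity, verify that $DM$ satisfies Thurston's hyperbolization hypotheses, and then use Mostow rigidity to realize the natural doubling involution as an isometry whose fixed point set is a totally geodesic surface isotopic to $\partial M$.

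First I would check that $DM$ is Haken, irreducible, atoroidal, and has infinite fundamental group. The surface $\partial M \subset DM$ is incompressible, since a compression disk in either copy of $M$ would be an essential disk in $M$, which has genus $0 < 2$ and is ruled out by hypothesis; thus $DM$ is Haken. Irreducibility of $DM$ follows from irreducibility of $M$ (no essential 2-sphere since genus $0 < 2$) together with the incompressibility of $\partial M$: any embedded 2-sphere in $DM$ can be put in general position with $\partial M$ and cut-and-pasted to lie in one copy of $M$, hence bounds a ball. For atoroidality, any essential torus $T \subset DM$ can be made transverse to $\partial M$; if $T \cap \partial M = \emptyset$ it lies in one copy of $M$ contradicting the hypothesis (genus $1 < 2$); otherwise an innermost argument produces an essential annulus in $M$, which is again a genus-$<2$ essential surface. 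Finally $\pi_1(DM)$ is infinite because $\pi_1(M) \hookrightarrow \pi_1(DM)$ by the Seifert--van Kampen theorem applied to the incompressible splitting surface $\partial M$.

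Thurston's hyperbolization theorem for closed Haken 3-manifolds then equips $DM$ with a complete hyperbolic metric of finite volume. Let $\tau \colon DM \to DM$ be the canonical involution swapping the two copies of $M$ (fixing $\partial M$ pointwise). By Mostow rigidity, the outer automorphism of $\pi_1(DM)$ induced by $\tau$ is realized by an isometry $\tilde\tau$. Since $\tilde\tau$ has order $2$ and is isotopic to $\tau$, its fixed set $F = \mathrm{Fix}(\tilde\tau)$ is a totally geodesic (possibly disconnected) surface, and from the isotopy $F$ is homotopic, hence by Waldhausen's theorem ambient-isotopic, to $\partial M$. Cutting $DM$ along $F$ reproduces two isometric copies of a hyperbolic 3-manifold isometric to $M^\circ$ with totally geodesic boundary $F \cong \partial M$, giving the desired hyperbolic structure on $M$.

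The main obstacle is, of course, Thurston's hyperbolization theorem itself, whose proof in \cite{Th2} occupies many chapters and relies on the deformation theory of Kleinian groups, the double limit theorem, and the skinning lemma. A secondary technical point is a careful verification of atoroidality of $DM$: the innermost-circle argument on $T \cap \partial M$ must rule out not only tori interior to $M$, but also essential annuli with both boundary circles on $\partial M$, which is precisely where the ``no essential surface of genus $<2$'' hypothesis on $M$ is used in its full strength.
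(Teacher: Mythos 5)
The paper does not prove this statement at all: it is quoted verbatim from Thurston \cite{Th2} as a black box (it is only used in Remark 2.8 as an alternative construction), so any proof you give is necessarily a ``different route.'' Your doubling argument is in fact the standard folklore derivation of this result from the hyperbolization theorem for closed Haken manifolds, and the reduction steps (incompressibility of $\partial M$ in $DM$, irreducibility, the innermost-circle argument turning an essential torus in $DM$ into an essential torus or annulus in $M$) are correct and are exactly where the ``no essential surface of genus $<2$'' hypothesis gets used. Two caveats: Thurston's theorem for closed Haken manifolds concludes hyperbolicity only after also excluding Seifert fibered spaces, so you should add a line ruling that out (e.g.\ an atoroidal closed Seifert fibered space with infinite $\pi_1$ cannot contain a two-sided incompressible surface of genus $\ge 2$ that is neither vertical nor horizontal, or simply note that $\pi_1(DM)$ would contain $\mathds{Z}\oplus\mathds{Z}$); and the statement as given is really only meant for $\partial M$ of genus $\ge 2$ --- if $\partial M$ had a torus component, $DM$ would contain an essential torus and your own atoroidality check would fail, which is a defect of the theorem's phrasing rather than of your argument.

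The one genuine gap is the last step. From Mostow rigidity you get an isometric involution $\tilde\tau$ inducing the same outer automorphism as $\tau$, but ``$\tilde\tau$ has order $2$, hence its fixed set is a totally geodesic surface'' is not valid: an orientation-reversing isometric involution of a hyperbolic $3$-manifold is locally modelled on an element of $O(3)$ of determinant $-1$ and order $2$, which is either a reflection in a plane or $-I$, so $\mathrm{Fix}(\tilde\tau)$ is a priori a disjoint union of totally geodesic surfaces \emph{and isolated points}, and could even be empty. The standard way to close this gap is group-theoretic rather than isotopy-theoretic: a lift $\hat\tau$ of $\tilde\tau$ to $\mathds{H}^{3}$ can be chosen to centralize the quasi-Fuchsian surface subgroup $H=\pi_1(\partial M)\subset\pi_1(DM)$ (because $\tau$ fixes $\partial M$ pointwise), hence $\hat\tau$ fixes the limit set $\Lambda(H)$ pointwise; an orientation-reversing isometry of $\mathds{H}^{3}$ fixing more than two points of $S^{\infty}$ must be a reflection in a geodesic plane and its fixed circle must contain $\Lambda(H)$, which forces $\Lambda(H)$ to be a round circle, $H$ to be Fuchsian, and the plane to descend to a totally geodesic surface carrying $\pi_1(\partial M)$. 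Only after that does cutting $DM$ along this surface produce the hyperbolic structure on $M$ with totally geodesic boundary. With that step supplied, your proof is the correct and standard one.
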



\begin{thm}\label{Heegaard distance}\cite{He}
Let $M$ be a closed oriented 3-manifold which is Seifert fibered or
which contains an essential torus. Then any splitting of $M$ is a
Heegaard distance $\leq2$ splitting.
\end{thm}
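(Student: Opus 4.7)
The plan is to fix an arbitrary Heegaard splitting $M=V_1\cup_S V_2$ and, under either hypothesis, to produce a single essential simple closed curve $c$ on $S$ which is disjoint (up to isotopy on $S$) from the boundary of some essential disk $D_1\subset V_1$ and from the boundary of some essential disk $D_2\subset V_2$. This gives a length-two path $\partial D_1,\,c,\,\partial D_2$ in the curve complex $\mathcal{C}(S)$, yielding $d(S)\leq 2$ directly from the definition.

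For the essential torus case, I would take an essential torus $T\subset M$, isotope it to meet $S$ transversely with $|T\cap S|$ minimal, and observe that each surface $F_i:=T\cap V_i$ is a nonempty properly embedded incompressible surface in the handlebody $V_i$, no component of which is a trivial disk (by the minimality together with the essentiality of $T$). The key lemma I would invoke is the standard fact that a properly embedded incompressible, non-disk surface $F$ in a handlebody $H$ always admits a compressing disk of $\partial H$ disjoint from $F$; this follows by an iterated-compression argument using the freeness of $\pi_1(H)$. Applied to $F=F_i$, it furnishes disks $D_i\subset V_i$ whose boundaries miss $T\cap S$ entirely, so any component $c$ of $T\cap S$ serves as the desired midpoint of the length-two chain.

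For the Seifert fibered case one splits according to the base orbifold $O$. Whenever $O$ has positive genus or at least four cone points, $M$ contains a vertical essential torus (lying over an essential loop or arc in $O$), so one reduces to the previous case. The remaining small Seifert fibered spaces --- those with base $S^2$ and at most three exceptional fibers, including $S^3$ and lens spaces --- require a direct argument: here I would use the vertical fibration to build a vertical annulus meeting $S$ in a single essential curve $c$ and invoke the Moriah--Schultens classification of vertical/horizontal splittings to exhibit disks on each side disjoint from $c$. The main obstacle I foresee lies in the torus case: one must rule out that minimization leaves the intersection empty (forcing $T$ into a single handlebody, which then demands an alternative compression argument using the incompressibility of $T$) and that every component of $F_i$ is boundary-parallel; one must also separately handle reducible splittings, but there $d(S)=0\leq 2$ is trivial. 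The small Seifert fibered sub-case is the second delicate point, since one cannot produce an essential torus and must instead extract the curve $c$ from the fibration itself.
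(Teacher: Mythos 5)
The first thing to say is that the paper contains no proof of this statement: it is quoted from Hempel \cite{He} inside Remark \ref{other appraoch} and used purely as an external input (together with geometrization) to sketch an alternative construction of manifolds with totally geodesic boundary. So your attempt can only be measured against the standard argument, not against anything in this paper. Your global strategy --- find one curve $c\subset S$ together with essential disks $D_i\subset V_i$ whose boundaries miss $c$, giving the path $\partial D_1, c,\partial D_2$ --- is indeed the right skeleton for the toroidal case.

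The genuine gap is your ``key lemma'': it is false that a properly embedded incompressible non-disk surface $F$ in a handlebody always misses some essential disk. In a solid torus, the incompressible annulus whose two boundary components are $(2,1)$-curves (the frontier of a neighborhood of a M\"obius band around the core) meets the meridian disk, which is the only essential disk; and in genus $\geq 2$ one gets counterexamples from incompressible surfaces whose boundary is disk-busting. Freeness of $\pi_1$ only gives you that an incompressible, boundary-incompressible surface in a handlebody is a disk, i.e.\ that $F$ is boundary-compressible; iterated (boundary-)compression changes $F$ and does not by itself produce a disk disjoint from the original $F$. What actually saves the argument is the special structure you did not exploit: since the curves of $T\cap S$ are essential in $T$, each component of $T\cap V_i$ is an \emph{annulus}; it is incompressible in $V_i$ because a compressing disk for it would compress $T$ in $M$; and at minimal intersection no component is boundary-parallel (push an innermost boundary-parallel annulus across $S$). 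For such an essential annulus a single boundary-compression yields a disk that can be isotoped off the annulus and whose boundary is essential precisely because the annulus is not boundary-parallel --- this, not your general lemma, is the correct tool, and it is also where the genus-one case (lens spaces and their solid-torus annuli, for which the claim fails and which must be excluded or treated with a different distance convention) has to be set aside. Your other flagged worries are harmless: $T\cap S\neq\emptyset$ because handlebodies contain no closed incompressible surfaces, and reducible manifolds have reducible splittings by Haken's lemma. The Seifert-fibered half of your write-up, resting on an unproved appeal to vertical tori and to the Moriah--Schultens classification for the small base orbifolds, is the right route but remains an outline rather than a proof.
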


Theorem \ref{Heegaard distance} in \cite{He} is stated for  closed
3-manifolds, but the argument there can be used to prove the similar
theorem for non-closed case. Combine Theorem \ref{Heegaard distance}
and Theorem \ref{Geometrization} we can conclude that: If we attach
some two handles to a handlebody  so that the distances in curve
complex between the attaching curves of the two-handles and the
boundaries of the compressing disks of the handlebody are larger
than 3, then we will get   a hyperbolic 3 manifold with totally
geodesic boundary.

\end{rmk}

\section{Proof of  Theorem \ref{classifying theorem in torsion free}}
\noindent In this section, we will classify all the possible fixed
subgroups of automorphisms of cofinite volume klein groups.
We use $Iso \mathds{H}^{3}$ (resp. $Iso_+ \mathds{H}^{3}$) to denote the group
of (resp. orientation preserving) isometries of the 3-dimensional hyperbolic 3-space.

The most important tool is the following algebraic version of Mostow
rigidity theorem. Most topologists know the geometric version of
Mostow rigidity: Any homotopy equivalence between finite volume
hyperbolic 3 manifolds can be homotopied to an isometry. The
following algebraic version appears in \cite{MR}, which is
equivalent to the geometric version.

\begin{thm}\label{mostow rigidity}
Let $\Gamma_{1}$ and $\Gamma_{2}$ be two cofinite volume klein
groups, and $\phi:\Gamma_{1}\rightarrow\Gamma_{2}$ be an isomorphism
between them. Then there exist $\gamma\in Iso(\mathds{H}^{3})$
($\gamma$ may be orientation reversing) such that for any
$\alpha\in\Gamma_{1},\phi(\alpha)=\gamma\alpha\gamma^{-1}$.
\end{thm}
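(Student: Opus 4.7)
The plan is to reduce the algebraic statement to the geometric form of Mostow rigidity and then read off the conjugating isometry from the lift. Let $M_i=\mathds{H}^{3}/\Gamma_i$ be the quotient hyperbolic $3$-manifolds; since each $\Gamma_i$ is a torsion-free Kleinian group of cofinite volume, $M_i$ is a complete finite-volume hyperbolic $3$-manifold and is a $K(\Gamma_i,1)$. Fix basepoints $x_i\in M_i$ together with lifts $\tilde x_i\in\mathds{H}^{3}$. Using standard obstruction theory for maps into Eilenberg--MacLane spaces, I realize $\phi$ by a based continuous map $f:(M_1,x_1)\to(M_2,x_2)$ with $f_{*}=\phi$; since $\phi$ is an isomorphism and both manifolds are aspherical, $f$ is automatically a (proper) homotopy equivalence.

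Next I invoke the geometric form of Mostow--Prasad rigidity: the homotopy equivalence $f$ is properly homotopic to an isometry $f_0:M_1\to M_2$ (possibly orientation-reversing in the non-orientable--diffeomorphism sense). Lift $f_0$ to the unique isometry $\gamma:\mathds{H}^{3}\to\mathds{H}^{3}$ sending $\tilde x_1$ to the lift of $f_0(x_1)$ determined by the chosen basepoint data; then $\gamma\in Iso(\mathds{H}^{3})$. By the standard lift-versus-deck-transformation correspondence, for each $\alpha\in\Gamma_1$ both $\gamma\circ\alpha$ and $\phi(\alpha)\circ\gamma$ are lifts of $f_0$ starting at $\alpha(\tilde x_1)$; because a lift of an isometry is determined by its value at a single point and $\phi=(f_0)_{*}$, the two lifts coincide. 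This yields $\phi(\alpha)=\gamma\alpha\gamma^{-1}$ in $Iso(\mathds{H}^{3})$ for every $\alpha\in\Gamma_1$, which is exactly the statement.

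The main obstacle is of course the geometric Mostow--Prasad theorem itself, which is taken here as a black box from \cite{MR}. Its proof proceeds by promoting a smooth model of $\tilde f:\mathds{H}^{3}\to\mathds{H}^{3}$ to a pseudo-isometry (using the Margulis thick--thin decomposition to match cusp regions when $M_i$ is non-compact), extending $\tilde f$ continuously to a quasi-conformal homeomorphism of the sphere at infinity $S^{2}_{\infty}$, and then using ergodicity of the $\Gamma_1$-action on $S^{2}_{\infty}\times S^{2}_{\infty}$ to conclude that the boundary map has vanishing conformal distortion almost everywhere, hence is M\"obius. Any M\"obius self-map of $S^{2}_{\infty}$ extends uniquely to an isometry of $\mathds{H}^{3}$, which is the $\gamma$ produced above; the algebraic conjugation identity is then a formal consequence of equivariance.
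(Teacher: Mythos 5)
The paper itself gives no argument for this statement: it is quoted from \cite{MR} with the remark that the algebraic form is equivalent to the geometric one, and that is all. What you have written is precisely the standard derivation of the algebraic statement from geometric Mostow--Prasad rigidity, with the geometric theorem taken as a black box, so in spirit you are supplying the equivalence the paper merely asserts. The overall scheme (realize $\phi$ by a map of aspherical manifolds, upgrade to an isometry by rigidity, lift, and read off the conjugation) is correct and is exactly what the paper needs, since it only ever applies the theorem to torsion-free groups $G=\pi_1(M)$.

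Two points deserve attention, though. First, the theorem as stated assumes only that $\Gamma_1,\Gamma_2$ are cofinite-volume Kleinian groups, and such groups may contain torsion; in that case $\mathds{H}^{3}/\Gamma_i$ is an orbifold rather than a $K(\Gamma_i,1)$ manifold, and your first paragraph (realizing $\phi$ by a homotopy equivalence of aspherical manifolds) does not apply verbatim. Either add the torsion-free hypothesis explicitly (harmless for the paper's use), or note that the boundary argument you sketch in your last paragraph --- equivariant pseudo-isometry of $\mathds{H}^{3}$, quasi-conformal extension to $S^{2}_{\infty}$, ergodicity forcing the boundary map to be M\"obius --- can be run directly at the level of the group actions and needs no torsion-freeness. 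Second, after you replace $f$ by the isometry $f_0$, the identity $\phi=(f_0)_{*}$ holds only up to the inner automorphism coming from the track of the homotopy at the basepoint; your lifting argument as written silently assumes they are equal on the nose. This is not a real obstruction --- an inner discrepancy only changes $\gamma$ by left composition with an element of $\Gamma_2$, which is still an isometry, so the asserted conjugation $\phi(\alpha)=\gamma\alpha\gamma^{-1}$ survives --- but the step should be stated rather than elided. Similarly, the claim that $f$ is ``automatically proper'' in the cusped case deserves the one-line justification that an isomorphism of such groups preserves the peripheral $\mathds{Z}\oplus\mathds{Z}$ subgroups, hence the end structure.
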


Now let's prove Theorem \ref{classifying theorem in torsion free}.

\begin{proof} Since $G=\pi_{1}(M)$ is a hyperbolic 3-manifold group,
  $G$ can considered as cofinite volume toriosn free Kleinian   group in $Iso_+ \mathds{H}^{3}$.
Now $G$ acts on $\mathds{H}^{3}$ as
 the deck  transformation group for the covering  $\pi :\mathds{H}^{3} \to \mathds{H}^{3}/G\cong M$. Then by
Theorem 3.1, there exist $\gamma\in Iso(\mathds{H}^{3})$ such that
for any $\alpha\in G,\phi(\alpha)=\gamma\alpha\gamma^{-1}$. Then

$$Fix(\phi)=\{\alpha\in G\mid \alpha\gamma=\gamma\alpha\}.\qquad(3.1)$$

Because $\gamma G\gamma^{-1}=G$, $\gamma$ induces  an isometry $f$
of $M$, such that the following diagram commutes.
$$
\xymatrix{
\mathds{H}^{3} \ar[rr]^{\gamma}\ar[d]_{\pi} &   &\mathds{H}^{3}\ar[d]_{\pi}  \\
M \ar[rr]^{f}  &   &M } \qquad (3.2)
$$

The  verification of Theorem \ref{classifying theorem in torsion
free} will be based on (3.1) and the verification will be divided
into two cases according to if $\gamma$ in (3.1) is orientation
preserving or not. \vskip 0.3 truecm

Case (1) $\gamma$ is orientation preserving.

(i) If $\gamma=e$ then clearly $Fix (\phi)=G$.

Below we assume that $\gamma$ is nontrivial. It is well-known that
each element in $G$ is either hyperbolic or parabolic;  moreover two
nontrivial elements $\alpha, \beta$ in $Iso_{+}(\mathds{H}^{3})$
commute if and only if in one of the following cases happen:

(a) Both $\alpha$ and $\beta$ are  parabolic elements and they share
the same fixed point in the infinite sphere $S^{\infty}$;

(b) Both $\alpha$ and $\beta$ are  non-parabolic elements (elliptic
or hyperbolic) and they share the same axis.

(c) Both $\alpha$ and $\beta$ are elliptic elements with rotation angle $\pi$ and their axis are perpendicular to each other.

Since elements in $G$ (therefore in $Fix(\phi)$) can not be elliptic, we just need to consider case(a) and case (b). In these two cases, if $\alpha,\beta,\gamma$ are all nontrivial, $\alpha$ commutes with $\beta$, $\beta$ commutes with $\gamma$, then $\alpha$ commute with $\gamma$. We see that $Fix(\phi)$ is a torsion free abelian group. As we know, the
fundamental group of a hyperbolic 3-manifold can contain torsion
free  abelian subgroups of ranks at most 2. So we have proved that:
if $\phi$ is induced by an orientation preserving map, $Fix(\phi)$
can only be $e$, $\mathds{Z}$, $\mathds{Z\bigoplus\mathds{Z}}$, or
$G$.

\vskip 0.2 truecm

(ii) If we further assume that $M$ is closed, then we have more
restrictions.

First $\pi_1(M)$ contains no subgroup $\mathds{Z\bigoplus}\mathds{Z}$ for a closed hyperbolic 3-manifold $M$.

Also we claim that $Fix(\phi)\neq \{e\}$. In fact the self-isometry
of a closed hyperbolic 3 manifold is always periodic. So there
exists positive integer $n$ such that  $f^{n}=id$. By
commuting diagram (3.2), $\gamma^{n}$ induces the identity on $M$, therefore  $\gamma^{n}\in G$. Then clearly $\gamma^{n}\in
Fix(\phi)$. If $\gamma^{n}\neq e$, then $Fix(\phi)$ is not trivial.
If $\gamma^{n}= e$, then $\gamma$ is an elliptic element, so there
is an axis $l$ pointwise fixed by $\gamma$. By the commuting
diagram (3.2), $\pi(l)$ is pointwise fixed by the isometry $f$. Since
$M$ is closed, the fixed point set of a orientation preserving
isometry can only be closed geodesics. So $\pi(l)$ is a closed
geodesic. This means that there is a hyperbolic covering
transformation $\alpha\in G$ sharing the axis $l$ with $\gamma$, so
$\alpha\gamma=\gamma\alpha$, and therefore $\alpha\in Fix(\phi)$ by (3.1).

We have actually proved that if $\phi$ is induced by a orientation
preserving map and $M$ is closed. Then $Fix(\phi)$ can  be either
$\mathds{Z}$ or $G$.

\vskip 0.3 truecm

Case (2) $\gamma$ is orientation reversing.

Note $Fix(\phi)\subseteq Fix(\phi^{2})$ and  $\phi^{2}$ is induced
by an orientation preserving map. There are two subcases now:

(i) $\phi^{2}\neq id$. Then $Fix(\phi^2)$ can only be $e,\mathds{Z}$
or $\mathds{Z\bigoplus\mathds{Z}}$ by Case (1) (i) and its proof, therefore $Fix(\phi)$ can only be $e,\mathds{Z}$
or $\mathds{Z\bigoplus\mathds{Z}}$.

But in fact the situation $\mathds{Z\bigoplus\mathds{Z}}$ never
happens. Because in this situation
$Fix(\phi^{2})=\mathds{Z\bigoplus\mathds{Z}}$, which was generated
by two parabolic elements $\beta_1$ and $\beta_2$ sharing the same
fixed point $p$ on the infinite sphere. Since we assume that
$\phi^{2}\neq id$, $\gamma^{2}$ is also a parabolic element  with
the the fixed point $p$. Since $Fix (\gamma)\subset Fix (\gamma^2)$,
one can derived that $\gamma$ has the unique fixed point  $p$ in
$\mathds{H}^{3}\cup S^\infty$, and in the upper-half model of
$\mathds H^3$ (we set $p=\infty$), $\gamma^{2},\beta_{1},\beta_{2}$
are translations along some directions $v_{1},v_{2},v'$
respectively.

Consider their extended action on the plane $z=0$. Then $\gamma$
acts as a conformal (orientation reversing) map on this plane (in
order to see this, we can just compose $\gamma$ with an arbitrary
reflection $r'$ which fixes $p$ to get an orientation preserving
isometry. By classical fact, both $r'\circ\gamma$ and $r'$ act
conformally on plane $z=0$, then so does $\gamma$). Because
$\gamma^{2}$ is a translation, $\gamma$ must act as an orientation
reversing isometry on the plane $z=0$. So $\gamma\mid_{z=0}=r\circ
h$, where $h$ is a translation along the direction $v'$ and $r$ a
reflection along an invariant line of $h$. Then it is a direct
verification that $\gamma$ commutes with $\beta_i$ if and only if
the directions of $v'$ and $v_i$ are either the same or opposite.
But the directions of $v_1$ and $v_2$ are neither the same nor
opposite, so $\gamma$ can not commute with both two generators of
$\mathds{Z\bigoplus\mathds{Z}}$.



We have proved that in this subcase $Fix(\phi)$  is either  $e$ or $\mathds{Z}$.

\vskip 0.2 truecm

(ii) $\phi^{2}=id$. Then $\gamma^{2}$ commute with the whole group
$G$. So $\gamma^{2}=e$, $\gamma$ has order 2. An order 2 orientation
reversing isometry of $H^{3}$ can only be the reflection along a
single point or reflection along a geodesic plane.

If $\gamma$ is the reflection along a point $p\in\mathds{H}^{3}$,
then $p$ is the only fixed point of $\gamma$. For any $\alpha\in
Fix(\phi)$, we have $\gamma\alpha=\alpha\gamma$ by (3.1). Hence
$\gamma\alpha(p)=\alpha\gamma(p)=\alpha(p)$, that is  $\gamma$ also
has fixed point $\alpha(p)$, hence $p=\alpha(p)$. Because $\alpha$
is a covering transformation, we msut have $\alpha=e$.  We have
proved that if $\gamma$ is a reflection along a single point, then
$Fix(\phi)$ is trivial.

If $\gamma$ is the reflection along a totally geodesic plane $P$.
Then $P$ is pointwise fixed by $\gamma$. Because of the commuting
diagram (3.2), $\pi(P)$ is pointwise fixed by $f$. We know that the
fixed point set of an orientation reversing isometry of a hyperbolic
3-manifold must be totally geodesic surfaces if it is dimension 2.
So $\pi(P) \simeq S$, $S$ is a totally geodesic surface in $M$. ($S$
may be non-orientable although $M$ is orientable. And if $M$ has
cusps, $S$ may have cusps too).

For any $\alpha\in Fix(\phi)$, $\gamma\alpha=\alpha\gamma$. Then for
each $x\in P$,  $\gamma\alpha(x)=\alpha\gamma(x)=\alpha(x)$, that is
$\alpha(x)\in P$. It follows that  $P$ is invariant under $\alpha$.

Conversely, suppose a covering transformation $\alpha\in G$ such
that $\alpha(P)=P$. Then it is easy to see that
$\gamma\alpha=\alpha\gamma$ and therefore   $\alpha \in Fix(\phi)$.
So $Fix(\phi)$ is exactly the covering transformations of the
universal covering map $P\xrightarrow[]{\pi|P}S$. We have proved
that $Fix(\phi)\cong\pi_{1}(S)$. \end{proof}


\section{Proof of Theorem \ref{sharp bound}}

By Theorem \ref{classifying theorem in torsion free}, the situation
$\text{rk}(Fix(\phi))> \text{rk}(\pi_1(M))$ can appear only if Case
(2) (ii) in Theorem \ref{classifying theorem in torsion free}
happens, and if Case (2) (ii) in Theorem \ref{classifying theorem in
torsion free} happens, then $Fix(\phi)=\pi_1(S)$ for some surface
$S$ which is pointwise fixed by an orientation reversing isometry
$f$ of order 2 on $M$. So the proof of the Theorem \ref{sharp bound}
will be completed by the following

\begin{pro}\label{surface rank}
Suppose $M$ is a hyperbolic 3-manifold and  $S$ is a proper embedded
surface in $M$.  If there is an orientation reversing isometry $f$
of order 2 on $M$ fixing $S$ pointwisly. Then
$$\text{rk}(\pi_{1}(S))<2\text{rk}(\pi_{1}(M)).$$
\end{pro}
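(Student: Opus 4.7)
The hypothesis that $f$ is an isometric, orientation-reversing involution fixing $S$ pointwise forces $f$ to act as $-1$ on the normal bundle of $S$, so $S$ is a two-sided, totally geodesic (hence incompressible and $\pi_{1}$-injective) surface whose two local sides are swapped by $f$. I may assume $S$ is connected. I split into the cases $S$ separating or non-separating.

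For the separating case, $M = M_{1} \cup_{S} M_{2}$ with $f(M_{1}) = M_{2}$, so $M_{1} \cong M_{2}$ and Van Kampen gives
\[
\pi_{1}(M) \cong G *_{H} G, \qquad G := \pi_{1}(M_{1}), \quad H := \pi_{1}(S).
\]
Folding both vertex copies of $G$ onto a single copy (well defined because the two edge inclusions $H \hookrightarrow G$ coincide) yields a surjection $\pi_{1}(M) \twoheadrightarrow G$, hence $\text{rk}(\pi_{1}(M)) \geq \text{rk}(G)$. The key estimate is the half-lives-half-dies inequality applied to the compact orientable 3-manifold $M_{1}$: $b_{1}(\partial M_{1};\mathbb{Q}) \leq 2\,b_{1}(M_{1};\mathbb{Q})$. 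Writing $\partial M_{1} = S \sqcup \Sigma'$, where $\Sigma'$ consists of the tori or annuli inherited from cusps of $M$,
\[
b_{1}(S) + b_{1}(\Sigma') \;\leq\; 2\,b_{1}(M_{1}) \;\leq\; 2\,\text{rk}(G) \;\leq\; 2\,\text{rk}(\pi_{1}(M)),
\]
and using $\text{rk}(\pi_{1}(S)) \leq b_{1}(S) + 1$ (the $+1$ only needed when $S$ is closed non-orientable) gives $\text{rk}(\pi_{1}(S)) \leq 2\,\text{rk}(\pi_{1}(M)) - b_{1}(\Sigma') + 1$. When $M$ has a cusp contributing to $M_{1}$, the term $b_{1}(\Sigma')$ is at least $1$, often more, which already makes the bound strict.

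For the non-separating case, the fact that $S$ is two-sided non-separating yields a nontrivial $\mathbb{Z}/2$-valued cocycle given by algebraic intersection with $S$. I pass to the corresponding double cover $\widetilde{M} \to M$: there the preimage of $S$ is separating, $f$ lifts, the separating argument applies to $\widetilde{M}$, and the resulting inequality descends to $M$ via standard covering bounds on rank.

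The main obstacle is the closed-orientable case, where $M$ is closed and $S$ is closed orientable of genus $g$: the chain above only gives $2g \leq 2\,\text{rk}(\pi_{1}(M))$ with potential equalities throughout. Strict inequality must be extracted from the geometric hypothesis that $M_{1}$ is hyperbolic with non-empty totally geodesic boundary --- such an $M_{1}$ contains a surface subgroup $H \subsetneq G$, is not a free group, and carries relations outside the commutator subgroup. Turning this into a quantitative strict inequality --- for instance, showing $\text{rk}(\pi_{1}(M_{1})) > b_{1}(M_{1})$ so that the middle inequality becomes strict, or showing that the proper amalgamation $G *_{H} G$ with $H \subsetneq G$ strictly increases rank over $G$ --- is the technical heart of the proof.
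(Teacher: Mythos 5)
Your separating-case skeleton matches the paper's: cut along $S$, fold onto one side to get $\mathrm{rk}(\pi_{1}(M))\geq \mathrm{rk}(\pi_{1}(M_{1}))$, and feed the boundary into half-lives-half-dies. But the argument as written has a genuine gap exactly where you flag one: in the closed case the chain only yields $2g\leq 2\,\mathrm{rk}(\pi_{1}(M))$, and you leave the promotion to a strict inequality as ``the technical heart'' without supplying it. That step is not a routine refinement; it is the one place the proof needs an input beyond Poincar\'e--Lefschetz duality. The paper's mechanism (its Lemma \ref{surface rank1}) is: (a) $\pi_{1}(S)\to\pi_{1}(M_{1})$ is not onto, because otherwise Stallings' theorem forces $M_{1}\cong S\times[0,1]$, which is excluded since $M$ is hyperbolic; (b) by Long--Niblo separability of the peripheral surface subgroup, there is a degree-$n$ cover $\tilde M_{1}\to M_{1}$ in which the preimage of $S$ has $m>1$ components of total genus $n(g-1)+m$; (c) half-lives-half-dies in the cover gives $\mathrm{rk}(\pi_{1}(\tilde M_{1}))\geq n(g-1)+m$, and the index formula $\mathrm{rk}(G)\geq(\mathrm{rk}(H)+n-1)/n$ then yields $\mathrm{rk}(\pi_{1}(M_{1}))\geq g+(m-1)/n>g$. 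Neither of your two suggested substitutes ($\mathrm{rk}>b_{1}$ for $M_{1}$, or rank growth under proper amalgamation) is established by what you wrote, and the second is false in general for amalgams, so the separability detour is really doing work here.

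Two smaller points. First, your non-separating reduction is too compressed: in the double cover determined by the intersection cocycle the preimage of $S$ is two disjoint copies of $S$, not a single separating surface, so ``the separating argument applies'' is not literally true; the paper instead cuts along $S$ to get $M'$, doubles along the two scars to build the degree-2 cover with a free involution, and then runs the boundary-genus count on $M'$ directly, keeping track of the $+1$ from $\mathrm{rk}(G)\geq(\mathrm{rk}(H)+1)/2$ to get strictness. Second, when $S$ has boundary you need the bookkeeping of how the arcs $c_{i}$ sit in the cusp tori (at most two per torus, paired into annuli after cutting); this is where the boundary components of $S$ get absorbed into the genus count, and your inequality $\mathrm{rk}(\pi_{1}(S))\leq b_{1}(S)+1$ plus ``$b_{1}(\Sigma')\geq 1$'' does not obviously close the gap when $k$ is large relative to the number of cusps.
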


To prove Proposition \ref{surface rank}, we need the following lemma
which contains several elmentary facts:
\begin{lem}\label{known facts}
(1) Suppose $G$ is a group with subgroup $H$ of index n. Then
$$\text{rk}(G)\geq \frac{\text{rk}(H)+n-1}{n}.$$

(2) Suppose $S$ is a boundary component of the compact 3-manifold
$M$, and $D_S(M)$ is the doubling of two copies of $M$ along $S$.
Then $$\text{rk}(\pi_{1}(D_S(M)))\geq \text{rk}(\pi_{1}(M)).$$

(3) (Half die half alive Lemma) Suppose $M$ is a  compact orientable
3-manifold. Then $$ \text{dim}\{\text{image } i^*:H^{1}(\partial
M,\mathds{Q})\to
H^{1}(M,\mathds{Q})\}=\frac{\text{dim}H^{1}(\partial
M,\mathds{Q})}2\qquad$$ where $i^*$ is induced by the inclusion $i:
\partial M\to M$.

(4) Suppose $M$ is a  compact orientable 3-manifold and $S$ is an
incompressible boundary component of $M$. If the homomorphism
$\pi_1(S)\to \pi_1(M)$ induced by the inclusion is not a surjection,
then there is a finite covering $p:\tilde M\to M$  such that
$p^{-1}(S)$ contains more then one component.

\end{lem}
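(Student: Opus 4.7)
The plan is to dispatch the four items independently since they are classical facts of rather different flavors.

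Parts (1) and (2) are algebraic and short. For (1), fix a surjection $\pi: F_r \twoheadrightarrow G$ with $r=\text{rk}(G)$. The preimage $\tilde{H}=\pi^{-1}(H)$ has index $n$ in $F_r$, so by the Nielsen--Schreier formula $\text{rk}(\tilde{H})=n(r-1)+1=nr-n+1$. Since $\pi$ restricted to $\tilde{H}$ surjects onto $H$, we get $\text{rk}(H)\leq nr-n+1$, which rearranges to the claimed bound. For (2), the fold map $F: D_S(M)\to M$, defined to be the identity on each of the two copies of $M$ (well-defined because they agree on the gluing locus $S$), is a retraction; hence $\pi_1(F): \pi_1(D_S(M))\twoheadrightarrow \pi_1(M)$ is a surjection and the image of any generating set of the domain generates the target, giving the rank inequality.

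Part (3) is the classical half-lives-half-dies lemma. I would combine the long exact sequence of the pair $(M,\partial M)$ in rational (co)homology with Poincar\'e--Lefschetz duality $H^k(M,\partial M;\mathds{Q})\cong H_{3-k}(M;\mathds{Q})$ and the universal coefficient theorem over $\mathds{Q}$. Chasing ranks through the relevant segment of the sequence shows that $\ker(i_*: H_1(\partial M;\mathds{Q})\to H_1(M;\mathds{Q}))$ is a Lagrangian subspace for the intersection form on $\partial M$, hence has dimension $\tfrac{1}{2}\dim H_1(\partial M;\mathds{Q})$; dualizing then yields the stated cohomological identity, where the image dimension equals half of $\dim H^1(\partial M;\mathds{Q})$.

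Part (4) I view as the main obstacle, since it rests on subgroup separability. My plan is: given $g\in\pi_1(M)\setminus\pi_1(S)$, invoke LERF for $\pi_1(M)$ with respect to the subgroup $\pi_1(S)$ (Scott's separability theorem for geometrically finite surface subgroups in the hyperbolic setting, which is the context in which this lemma is applied in the paper; more generally the Agol--Wise LERF result for hyperbolic 3-manifold groups) to produce a finite-index subgroup $K\leq \pi_1(M)$ containing $\pi_1(S)$ with $g\notin K$. The cover $p:\tilde{M}\to M$ corresponding to $K$ has $p^{-1}(S)$ whose components are in bijection with the double cosets $K\backslash \pi_1(M)/\pi_1(S)$; since $\pi_1(S)\subseteq K$, these reduce to the right cosets $K\backslash \pi_1(M)$, whose cardinality is $[\pi_1(M):K]\geq 2$, producing multiple components as required. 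The chief subtlety is justifying separability at the level of generality stated; for the paper's use, where $M$ is hyperbolic and $S$ is totally geodesic (hence geometrically finite), classical separability results suffice.
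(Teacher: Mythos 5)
Your four arguments track the paper's own proof quite closely: for (1) the paper works with a one-vertex $2$-complex realizing $G$ and its $n$-sheeted cover, which is exactly the Nielsen--Schreier count you carry out upstairs in $F_r$; for (2) the paper uses the folding map $D_S(M)\to D_S(M)/f\cong M$, i.e.\ the same retraction and the same surjectivity on $\pi_1$; for (3) the paper simply cites Moise, and your Poincar\'e--Lefschetz/long-exact-sequence sketch is the standard proof behind that citation; for (4) the paper, like you, separates $\pi_1(S)$ from a chosen element $\alpha\in\pi_1(M)\setminus\pi_1(S)$ inside a finite-index subgroup and passes to the corresponding cover.

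Two points of difference are worth flagging. First, in (4) your counting step is stated incorrectly: when $\pi_1(S)\subseteq K$, the components of $p^{-1}(S)$ correspond to the double cosets $K\backslash\pi_1(M)/\pi_1(S)$, and these do \emph{not} reduce to the right cosets of $K$ --- each double coset is a union of right cosets, so there may be strictly fewer of them than $[\pi_1(M):K]$. The conclusion is still easily recovered: $K$ itself is one double coset, and since $\pi_1(S)\subseteq K$ any double coset $Kg\pi_1(S)$ with $g\notin K$ is disjoint from $K$, so there are at least two double cosets and hence at least two components. (The paper argues instead that the distinguished component $\tilde S$ maps homeomorphically onto $S$, so the lift of $\alpha$ starting on $\tilde S$ must end on a different component of $p^{-1}(S)$.) Second, for the separability input the paper quotes Long--Niblo on separability of peripheral subgroups of compact orientable 3-manifolds, which proves (4) in the stated generality; your appeal to Scott and Agol--Wise is heavier machinery and, as you yourself note, directly applies only in the hyperbolic setting where the lemma is used. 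Citing the peripheral-subgroup separability result removes that caveat and matches the statement as given.
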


\pf (1) If $\text{rk}(G)=k$, we can find a 2-dimensional CW complex
$X$ with fundamental group $G$ so that in $X$ has only one vertex
and $k$ edges. Let $\tilde X$ be the n-sheet covering space
corresponding to the subgroup $H$. Then there $n$ vertex and $nk$
edge in $\tilde X$, and this lifted CW complex of $\tilde X$
provides a presentation of  $H$ with $n(k-1)+1$ generators. So
$\text{rk}(H)\leq n(k-1)+1$, that is
$$\text{rk}(G)=k\ge \frac{\text{rk}(H)+n-1}{n}.$$

(2) It is clear that there is a reflection $f$ about $S$ on
$D_S(M)$,  which provides a folding map $D_S(M)\rightarrow
D_S(M)/f\cong M$, and  which is obviously induce a epimorphism
between fundamental groups. Hence

$$\text{rk}(\pi_{1}(M))\geq \text{rk}(\pi_{1}(M_{1})).$$

(3) See \cite[Section 23]{Mo}

(4) By the assumptions,  $\pi_1(S)$ is a proper subgroup of
$\pi_1(M)$ (we pick a base point $x$ on $S$ for both $\pi_1(S)$ and
$\pi_1(M)$). Pick an non-zero element $\alpha$ of $\pi_1(M)$ but not
in $\pi_1(S)$. By \cite[Theorem 1]{LN} (peripheral subgroups are
separable), there is a finite index subgroup $H$ of $\pi_1(M)$ which
contains $\pi_1(S)$ but does not contain $\alpha$. Consider the
finite covering $p:\tilde M\to M$ corresponding to $H$, then there
is a component $\tilde S$ of $p^{-1}(S)$ homeomorphic to $S$ (since
$\pi_1(S)\subset H$), and $p^{-1}(S)$ has more than one component
(since $\alpha$ does not in $H$, the lift $\tilde \alpha$ of
$\alpha$ with one end in $\tilde S$ must has another end in another
component of  $p^{-1}(S)$). \qed

\begin{lem}\label{surface rank1}
Suppose $M$ is a  compact orientable 3-manifold and $S$ is an
incompressible boundary component of $M$ with genus $g$. If the
homomorphism $\pi_1(S)\to \pi_1(M)$ induced by the inclusion is not
a surjection, then $\text{rk}(\pi_{1}(M))>g$.

\end{lem}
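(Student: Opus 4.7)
The plan is to pass to a finite cover of $M$ in which $S$ lifts to at least two components, lower-bound $\dim H_1(\tilde M;\mathbb{Q})$ using the half-die-half-alive principle, and transport this bound back to $M$ via the Schreier-type rank estimate of Lemma~\ref{known facts}(1).

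First, by Lemma~\ref{known facts}(4), the non-surjectivity of $\pi_1(S) \to \pi_1(M)$ produces a finite covering $p: \tilde M \to M$ of some degree $n$ with $p^{-1}(S) = \tilde S_1 \sqcup \cdots \sqcup \tilde S_k$, $k \geq 2$. Each $\tilde S_i$ is a closed orientable surface covering $S$ of degree $n_i$, with $\sum_{i=1}^k n_i = n$ (so in particular $k \leq n$). An Euler-characteristic count gives $2 - 2g_i = n_i(2-2g)$, hence $g_i = 1 + n_i(g-1)$ and
$$\sum_i g_i = k + n(g-1), \qquad \dim H^1(p^{-1}(S); \mathbb{Q}) = 2k + 2n(g-1).$$

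Next, apply Lemma~\ref{known facts}(3) to the orientable 3-manifold $\tilde M$: the image of $H^1(\partial \tilde M; \mathbb{Q}) \to H^1(\tilde M; \mathbb{Q})$ has dimension exactly $\tfrac{1}{2} \dim H^1(\partial \tilde M; \mathbb{Q})$. Since $p^{-1}(S)$ is a disjoint union of components of $\partial \tilde M$, we have $\dim H^1(\partial \tilde M; \mathbb{Q}) \geq 2k + 2n(g-1)$, and therefore
$$\text{rk}(\pi_1(\tilde M)) \;\geq\; \dim H_1(\tilde M; \mathbb{Q}) \;=\; \dim H^1(\tilde M; \mathbb{Q}) \;\geq\; k + n(g-1).$$

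Finally, Lemma~\ref{known facts}(1) applied to the index-$n$ subgroup $\pi_1(\tilde M) \leq \pi_1(M)$ yields
$$\text{rk}(\pi_1(M)) \;\geq\; \frac{\text{rk}(\pi_1(\tilde M)) + n - 1}{n} \;\geq\; \frac{k + n(g-1) + n - 1}{n} \;=\; g + \frac{k-1}{n}.$$
Since $k \geq 2$ and $k \leq n$, the surplus $\tfrac{k-1}{n}$ lies in $(0,1)$, so $g + \tfrac{k-1}{n} \in (g, g+1)$; because $\text{rk}(\pi_1(M))$ is an integer, we conclude $\text{rk}(\pi_1(M)) \geq g+1 > g$.

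The main subtlety I anticipate is in the third step: one must check that the image supplied by half-die-half-alive carries enough $H^1$-contribution from $p^{-1}(S)$ despite possible dilution by other boundary components of $\tilde M$. This is handled by the monotonicity $\dim H^1(\partial \tilde M) \geq \dim H^1(p^{-1}(S))$ combined with the \emph{exact} half-dimension equality. Otherwise the argument is a short chain of three set-piece inputs (Lemma~\ref{known facts}(4), (3), (1)) tied together by the Euler-characteristic genus formula.
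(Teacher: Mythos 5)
Your proof is correct and follows essentially the same route as the paper's: the finite cover from Lemma~\ref{known facts}(4), the Euler-characteristic genus count giving $\sum_i g_i = k + n(g-1)$, the half-die-half-alive bound $\text{rk}(\pi_1(\tilde M)) \geq k + n(g-1)$, and the Schreier-type estimate of Lemma~\ref{known facts}(1) yielding $\text{rk}(\pi_1(M)) \geq g + \frac{k-1}{n} > g$. The only (harmless) addition is your final integrality remark upgrading the bound to $g+1$, which is not needed for the stated conclusion.
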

\pf Let $p: \tilde M\to M$ be the  $n$ sheet covering provided by
the proof of Lemma \ref{known facts} (4).  Then the  preimage of $S$
has $m>1$ component. Now we can compute the sum of the genus of
these $m$ boundary components. Because $S$ has euler number $2-2g$,
the sum of the euler number of the preimage of $S$ is $n(2-2g)$.
Therefore, the sum of their genus is $n(g-1)+m$.  Since
$H^{1}(\partial M',\mathds{Q})$ is a direct sum of the homology of
the boundary components, it contains at least $2n(g-1)+2m$ copies of
$\mathds{Q}$.

By Lemma \ref{known facts} (3), we have
$\text{rk}(H^{1}(M',\mathds{Q}))\geq n(g-1)+m$. Since
$H^{1}(M',\mathds{Q})$  is a quotient group of $\pi_{1}(M')$, we
have $\text{rk}(\pi_{1}(M'))\geq n(g-1)+m$.  Since $\pi_{1}(M')$ is
subgroup of $\pi_{1}(M)$ of  index $n$, by Lemma \ref{surface rank}
(1) we have
$$\text{rk}(\pi_{1}(M))\geq\frac{n(g-1)+m+n-1}{n}= g+\frac{m-1}{n}.$$

 Since $m>1$. Then Lemma
\ref{surface rank1} is proved.
 \qed

\begin{rmk}\label{stallings}
Suppose $M$ is a hyperbolic 3-manifold and  $S$ is a closed embedded
surface in $M$ which is pointwisely fixed by an orientation
reversing involution on $M$. Cutting $M$ along $S$, we get a compact
3-manifold $M'$ (may be not connected) with a boundary component
$S$. In the proof of Theorem \ref{surface rank} we will apply Lemma
\ref{surface rank1} to $S\subset M'$ directly by the following
reason:

Using the fact that $S$ is a proper embedded surface in $M$ which is
pointwisely fixed by an orientation reversing involution $f$ on $M$
and
 $M$ contain no
essential spheres and essential tori, it follows that (1) $S$ must
be incompressible (otherwise $M$ would contain essential spheres);
(2) the homomorphism $\pi_1(S)\to \pi_1(M')$ induced by the
inclusion is not a surjection, (otherwise $M'=S\times [0,1]$ by a
result of J. Stallings \cite[10.2 Theorem]{He1}, and then $M$ would
contain essential tori).
\end{rmk}
\vskip 0.3 true cm

Now we start to prove Proposition  \ref{surface rank}.

 \begin{proof} There are several cases to be
considered. The surface may be either separating or not, either
orientable or not, either closed or not. The proofs of all those
cases are similar, but some subtle differences may appear. So we
write all the details for each case.

Suppose the surface  $S$ has $k$ boundary components,  denoted by
$c_{1},c_{2},....c_{k}$. Different $c_{i}$ may be contained in the
same torus component, but a torus component can contain at most $2$
of such $c_{i}$. In fact, suppose $c_{i},c_{j}\subset T$, a torus
component  of $\partial M$. Now $f|T$ is an orientation reversing
involution on $T$. $c_{i}$ and  $c_{j}$ are two parallel circles on
$T$, fixed pointwisely by $f$. It's easy to see that $f$ interchange
the two connected components of $T-c_{i}\bigcup c_{j}$, so $T$ can
not contain any other component $c_l$ other than $c_{i}$ and
$c_{j}$.

 Without loss of
generality, we can assume that among the boundary components of $S$.
$c_{2i-1}$ and $c_{2i}$ are in the same torus boundary component,
$i=1,2,....m$. and $c_{2m+j}, j=1,2,...,l$ are contained in other
$l$ different boundary components.

Case (1) $S$ is orientable surface of genus $g$. Note
$$\text{rk}(\pi_{1}(S))=2g+k-1 \text{ if } k>0 \text{ and }
\text{rk}(\pi_{1}(S))=2g \text { if } k=0.\qquad (4.1)$$

We will divided the discussion into two  subcases according to if
$S$ is separating or not.

(i) $S$ is separating.  Then it is easy to see that $k=2m$ and
$l=0$. Cutting $M$ along the surface $S$, we get two homeomorphic
components $M_{1}$, $M_{2}$, and $f$ interchanges them. Now each
pair $c_{2i-1}, c_{2i}$ bounds an annulus in $M_1$ connecting $S$,
which increses the  genus of $S$ by 1. So we obtained a  boundary
component of $M_{1}$ with genus $(g+\frac{k}{2})$.

If $k>0$, then
$$2\text{rk}(\pi_{1}(M))\geq2\text{rk}(\pi_{1}(M_{1}))\geq
2\text{rk}(H_{1}(M_{1},Q))\geq
2(g+\frac{k}{2})>2g+k-1=\text{rk}(\pi_{1}(S)).$$  The first and  the
third inequalities and the last equality are based on Lemma \ref{known facts} (2), (3)
and (4.1) respectively.

If $k=0$, then $M_1$ is a hyperbolic 3-manifold with a totally
geodesic boundary component  $S$, which is incompressible. Then
$$2\text{rk}(\pi_1(M))\geq 2\text{rk}(\pi_1(M_1))>2g=\text{rk}(\pi_1(S)).$$
Those two inequalities and one equality are based on Lemma
\ref{known facts} (2), Lemma \ref{surface rank1} (also Remark
\ref{stallings}) and (4.1) respectively.

 (ii)  $S$ is non-separating.  Cutting $M$ along $S$ we get a new
connected manifold $M'$ with two copies of $S$, denoted by $S_{1}$
and $S_{2}$, in $\partial M'$.

Suppose $S_{1}$, $S_{2}$ are contained in the same boundary component $S'$ of $M$. Then $k>0$ and $S'$ consist of $S_{1}$, $S_{2}$ and $2m+l$ annulus, which is clearly closed and orientable, and $$g(\partial M')\ge g(S')=2g+2m+l-1=2g+k-1\qquad (4.2).$$

As before,
by Lemma \ref{known facts} (2), (3) and (4.2) we get
$$\text{rk}(\pi_{1}(M'))\geq \text{rk}(H_{1}(M',Q))\geq
\frac{\text{rk}(H_{1}(S',Q))}{2}=2g+k-1\qquad (4.3).$$ Now
$f'=f|_{M-S}$ is an involution on $M'$, which
 keeps the boundary component $S'$ invariant and interchanges $S_{1}$ and
$S_{1}$. Now let's take two copies of $M'$, denote them by $M'_{1}$
and $M'_{2}$, and glue them on $S_{1}$ and $S_{2}$ via the identity
 to get a new manifold $\widetilde{M}$.
Let $r$ be the reflection on $\tilde M$ about $S_1\cup S_2$. Then we
have a free involution $\tilde f$ on $\tilde M$ defined as

$$\tilde f|M'_1=f'\circ r \text{ and  } \tilde f|M'_2=r\circ f'.$$

 It is easy to verify
 that
$\pi: \widetilde{M}\to \widetilde{M}/\tilde f= M$ is a two fold
covering.

Applying Lemma \ref{known facts} (1) for $n=2$,  Lemma \ref{known
facts} (2),  (4.3) and (4.1) (recall that in this case $k>0$), we
have
$$2\text{rk}(\pi_{1}(M))\geq\text{rk}(\pi_{1}(\widetilde{M}))+1\geq \text{rk}(\pi_{1}(M'))+1\geq2g+k>\text{rk}(\pi_{1}(S)).$$

Suppose $S_{1}$ and $S_{2}$ belong to  two different components of
$\partial M'$. Then $l=0$ and each component consists of one $S_i$
and $m$ annuli,  hence

$$g(\partial M')\ge g(S_1)+m+g(S_2)+m=2g+k\qquad (4.4).$$

 Doubling two copies of $M'$ along $S_{1}$ and $S_{2}$ and
constructing a 2-fold covering $\tilde M\to M$ using the involution
$f$ as before, apply (4.4) we can prove similarly that:
$$2\text{rk}(\pi_{1}(M))\geq2g+k+1>\text{rk}(\pi_{1}(S)).$$

(2) $S$ is non-orientable surface of genus $g$ (connected sum of $g$
real projective planes).  Note
$$\text{rk}(\pi_{1}(S))=g+k-1 \text{ if } k>0 \text{ and }
\text{rk}(\pi_{1}(S))=g \text { if } k=0.\qquad (4.5)$$

In this case $S$ is non-separating. As before, we cut $M$ along $S$
to get a new manifold $M'$ with one boundary component $S'$ consisting of
the orientable double cover $\widetilde{S}$ of $S$ and $2m+l$
annulus, and we have $$g(\partial M')\ge g(S')=g-1+2m+l=k+g-1\qquad (4.6)$$

 The involution $f'=f|_{M-S}$ provides a covering transformation of
$\widetilde{S}\rightarrow S$. Again, we glue two copies of $M'$
along $\widetilde{S}$ to get the manifold $\widetilde{M}$ and a
double covering  $\widetilde{M}\to M$.

If $k>0$, as before, applying Lemma \ref{known facts} (1) for $n=2$,
Lemma \ref{known facts} (2), (3),   (4.6) and (4.5) in order, we
have
$$2\text{rk}(\pi_{1}(M))\geq \text{rk}(\pi_{1}(\widetilde{M}))+1\geq
\text{rk}(\pi_{1}(M'))+1\geq \text{rk}(H_{1}(M',Q))+1 \geq k+g
> \text{rk}(\pi_{1}(S)).$$

If $k=0$, applying Lemma \ref{known facts} (1) for $n=2$, Lemma
\ref{known facts} (2), Lemma \ref{surface rank1} (also Remark
\ref{stallings}), (4.6)  and (4.5) in order, we have
$$2\text{rk}(\pi_{1}(M))\geq \text{rk}(\pi_{1}(\widetilde{M}))+1\geq
\text{rk}(\pi_{1}(M'))+1>g-1+1=\text{rk}(\pi_{1}(S)).$$

We finished the proof of Proposition \ref{surface rank}.\end{proof}

\vskip 0.5 truecm

Jianfeng Lin

Department of Mathematics, Peking University, Beijing, China

linjian5477$@$pku.edu.cn

\vskip 0.5 truecm

Shicheng Wang

Department of Mathematics, Peking University, Beijing, China

wangsc$@$math.pku.edu.cn


\begin{thebibliography}{99999}

\bibitem[BH]{BH} M. Bestvina, M. Handel,  {\it Train tracks and automorphisms of free groups.} Ann. of Math. (2) 135 (1992)
\bibitem[He]{He} J. Hempel, {\it 3-manifolds as viewed from the curve complex.} Topology 40 (2001), no. 3, 631-657.
\bibitem [He1]{He1} J. Hempel, \textit{$3$-manifolds},
Ann. of Math. Studies vol. 86, Princeton University Press, 1976.
\bibitem[JWZ]{JWZ}  B. J. Jiang, S. D. Wang,  Q. Zhang, {\it Bounds for fixed points and fixed subgroups on surfaces and
graphs.} Algebr. Geom. Topol. 11 (2011), no. 4, 2297-2318.
\bibitem[Ko]{Ko} S. Kojima,  {\it Deformations of hyperbolic $3$-cone-manifolds.} J. Differential Geom. 49 (1998), no. 3, 469-516.
\bibitem[LN]{LN} Long, D. D.; Niblo, G. A. Subgroup separability and $3$3-manifold groups. Math. Z. 207 (1991), no. 2, 209¨C215.
\bibitem[MR]{MR} C. Maclachlan, A. Reid, {\it The arithmetic of hyperbolic 3-manifolds.} Graduate Texts in Mathematics, 219. Springer-Verlag, New York, 2003.
\bibitem[Mo]{Mo} E. Moise,  {\it Geometric topology in dimensions 2 and 3}.
Graduate Texts in Mathematics, Vol. 47. Springer-Verlag, New
York-Heidelberg, 1977.
\bibitem[SW]{SW} P. Scott, T. Wall, {\it Topological methods in group theory.} Homological group theory (Proc. Sympos., Durham, 1977), pp. 137¨C203, Cambridge Univ. Press, 1979.
\bibitem[Th1]{Th1} W. Thurston, {\it Geometry and topology of 3-manifolds}, Princeton University Lecture Notes, 1979-81.
\bibitem[Th2]{Th2} {\sc W. Thurston}, \emph{Three dimensional manifolds,
Kleinian groups and hyperbolic
 geometry}, Bull. Amer. Math. Soc. \textbf{316}, 1982, 357--381.

\bibitem[Zh]{Zh} Q. Zhou,  {\it The moduli space of hyperbolic cone structures.} J.
Differential Geom. 51 (1999), no. 3, 517-550.

\end{thebibliography}
\end{document}